\newtheorem{theorem}{Theorem}[section]
\newtheorem{lemma}{Lemma}[section]
\newtheorem{nota}{Remark}[section]
\newtheorem{definition}{Definition}[section]
\newtheorem{example}{Example}[section]
\newcommand{\disp}{\displaystyle}
\newcommand{\barint}{\hbox{$\int$\kern-0.75\intwidth
\vrule width 0.5\intwidth height 2.4pt depth
-2pt\kern0.25\intwidth}}
\newlength\intwidth
\newcommand\avint{\hbox{\hbox{$\displaystyle \int$}\hbox{\kern-.9em{$-$}}}}
\newcommand\smavint{\hbox{\hbox{$\int$}\hbox{\kern-.75em{$-$}}}}
\newcommand{\arginf}{\mathop{{\rm arg\,inf}}}
\def\XXint#1#2#3{{\setbox0=\hbox{$#1{#2#3}{\int}$}
\vcenter{\hbox{$#2#3$}}\kern-.51\wd0}}
\newcommand{\be}{\begin{eqnarray}}
\newcommand{\ee}{\end{eqnarray}}
\newcommand{\ben}{\begin{eqnarray*}}
\newcommand{\een}{\end{eqnarray*}}
\newcommand{\mass}{R}
\newcommand{\moment}{M}
\numberwithin{equation}{section}
\begin{document}

%%%%%%%%%%%%%%%%%%%%%%%%%%%%%%%%%%%%%%%%%%%%%%%
% Title
%%%%%%%%%%%%%%%%%%%%%%%%%%%%%%%%%%%%%%%%%%%%%%%
\title[Weak solutions of pressureless Euler-Poisson equations] {A variational representation of weak solutions\\for the
pressureless Euler-Poisson equations}

\author[Eitan Tadmor]{Eitan Tadmor}
\address[Eitan Tadmor]{\newline
        Department of Mathematics,  Institute for Physical Science and Technology\newline
        and Center of Scientific Computation And Mathematical Modeling (CSCAMM)\newline
        University of Maryland\newline
        College Park, MD 20742 USA}
 \email[]{tadmor@cscamm.umd.edu}
\urladdr{http://www.cscamm.umd.edu/\~{}tadmor}

\author[Dongming Wei]{Dongming Wei}
\address[Dongming Wei]{\newline
    Department of Mathematics\newline
        University of Wisconsin\newline
        Madison, WI 53706 USA}
\email []{dwei@math.wisc.edu}

\date\today

\subjclass{35L65, 35D05.} \keywords{Euler-Poisson equations, weak
solutions, sticky particles, variational representation, conservation laws.}

\thanks{\textbf{Acknowledgment.} Research was supported in part by NSF grants DMS07-07949 and DMS10-08397 (ET), FRG07-57227(ET and DW) and 07-57285(DW)}

%%%%%%%%%%%%%%%%%%%%%%%%%%%%%%%%%%%%%%%%%%%%%%%
% Abstract
%%%%%%%%%%%%%%%%%%%%%%%%%%%%%%%%%%%%%%%%%%%%%%%

\begin{abstract}

We derive an explicit  formula for  global weak
solutions of the one dimensional system of pressure-less Euler-Poisson equations.
Our  variational formulation  is an  extension of the well-known  formula for  entropy solutions of the scalar inviscid Burgers' equation:
since the characteristics of the Euler-Poisson
equations are parabolas, the representation of their weak solution takes the form of a
``quadratic" version of the celebrated Lax-Oleinik variational formula.
Three cases are considered. (i) The variational formula recovers the ``sticky particle" solution in the \emph{attractive case}; (ii) It represents  a \emph{repulsive solution}  which is different than the one obtained by the sticky particle construction; and (iii)   the result is further extended to the  \emph{multi-dimensional}  Euler-Poisson system with radial symmetry.

\end{abstract}
%%%%%%%%%%%%%%%%%%%%%%%%%%%%%%%%%%%%%%%%%%%%%%%%%%%%%%%%%%%%%%%%%%
\maketitle

%%%%%%%%%%%%%%%%%%%%%%%%%%%%%%%%%%%%%%%%%%%%%%%%%%%%%%%%%%%%%
\section{Introduction}\label{Intro}
%%%%%%%%%%%%%%%%%%%%%%%%%%%%%%%%%%%%%%%%%%%%%%%%%%%%%%%%%%%%%
\setcounter{figure}{0} \setcounter{equation}{0}

We study the system of \emph{pressureless Euler-Poisson} equations
\begin{subequations}\label{EP1}
\begin{eqnarray}
\rho_t+(\rho u)_x&=&0, \label{EP-mass}\\
(\rho u)_t+(\rho u^2)_x&=&\kappa\rho E, \quad E_x=\rho, \quad
E(-\infty,t)=0. \label{EP-moment}
\end{eqnarray}
\end{subequations}
Here $E$ is the electric field, $\kappa$ is a given physical
constant which signifies the type of the underlying forcing. We
distinguish between three different cases depending on the sign on
$\kappa$.

\noindent (i) The attractive case, $\kappa<0$. Solutions of
(\ref{EP1}) always breakdown at a finite time, $t=t_c$, where
$u_x(\cdot,t\uparrow t_c) \rightarrow -\infty$. 
%(Details: $d'+d^2=\kappa\rho$, $\rho'+\rho d=0$, $d:=u_x$. Then
%$\displaystyle \Big(\frac{d^2}{-\kappa \rho}\Big)'=-1$. This implies
%if $d(0)>0$, it will become 0 at a finite time. Then the finite time
%breakdown follows.)

\noindent (ii) The repulsive case, $\kappa>0$.  It was shown in
\cite{EnLT01} that in the presence of repulsing forcing,  there is a
large class of so-called \emph{sub-critical} initial data,
$u'_0>-\sqrt{2k\rho_0}$,  for which (\ref{EP1}) admits global smooth
solutions  governed by
\begin{equation}\label{PB}
u_t+uu_x=\kappa E;
\end{equation}
 see \cite{LT02, LT03, ChengT08, TWe08, ChaeT08, LTW10,
We10a, We10b} for  the critical threshold phenomena in related Euler and
Euler-Poisson systems. For super-critical data, however, the
repulsive Euler-Poisson solution breaks-down  at a critical time,
$t=t_c <\infty$, after which (\ref{PB}) is no longer equivalent to
(\ref{EP-moment}).

\noindent
(iii) Finally, there is the neutral case $\kappa=0$, governed by the \emph{pressureless Euler} equations,
\begin{subequations} \label{con}
\begin{equation}\label{con-mass}
\rho_t+(\rho u)_x=0
\end{equation}
\begin{equation}\label{con-moment}
(\rho u)_t+(\rho u^2)_x=0.
\end{equation}
\end{subequations}
For increasing data, $u_0'>0$, solutions of (\ref{con}) remains smooth (corresponding to sub-critical data in the  limiting case $\kappa=0$), and are governed by
 the decoupled inviscid Burgers equation
\begin{equation} \label{Burgers}
u_t+uu_x=0.
\end{equation}
It is well known that for general non-increasing initial data
solutions of (\ref{Burgers}) will lose their initial $C^1$
regularity  at a finite time \cite{La57}. Thereafter,
(\ref{Burgers})  and (\ref{con-moment})  are not equivalent.
Solutions of Burgers' equation (\ref{Burgers}) past the critical
time develop shock discontinuities and are given by the celebrated
Lax-Oleinik formula \cite{Ev98}. Solutions of the pressureless Euler
system (\ref{con}) develop $\delta$-shocks. Their construction
attracted  great attention in the 90's: they were obtained using
sticky particles formulation in \cite{Z70,CPY90,BG98,CKR07},  using
a generalized variational principle in \cite{ERS96}, and through a
vanishing pressure limiting process \cite{Bou93,CL03,CL04} and the
references therein. Uniqueness  was proved in \cite{HW01}.

Here, we are interested in the global weak solution of the (non-neutral)
Euler-Poisson system (\ref{EP1}),  in either  the attractive case,
$\kappa <0$,  or in the repulsive case $\kappa >0$ subject to
general  initial data, beyond  the global regularity in the sub-critical regime studied in \cite{EnLT01}. Among the  few known results we mention the existence and uniqueness
result of global weak solutions for
 the pressureless Euler-Poisson system \cite{NTu08}.

In this paper, we construct an explicit formula for weak solutions of the Euler-Poisson (\ref{EP1}).
For the inviscid Burgers' equation, generic shock develops
due to the intersection of \emph{straight} characteristics, after which entropy
solutions are given by the variational Lax-Oleinik formula. For the pressureless Euler-Poisson
equations, characteristics become \emph{quadratic}, and the solution, $\rho(x,t), \rho u(x,t)$ will be expressed in terms of the minimizer, $y(x,t)$ of the weighted quadratic form
\begin{equation}\label{EPysmu}
 y(x,t) =\sup\limits_{y}\bigg\{y \, \Big|
\, y = \arginf_y \int_0^y Q_{x,t}(s)\rho_0(s)ds \bigg \},   \quad
 Q_{x,t}(s):=s+tu_0(s)+\frac12 \kappa E_0(s)t^2-x.
\end{equation}

\begin{theorem}\label{IntroThm:1Ds}
[$L^1$ initial density] Consider %an attractive $(\kappa\leq 0)$
the pressureless Euler-Poisson system \eqref{EP1} subject to initial
data $u_0(x)\!:=\!u(x,0)\in C^1(\mathbb{R})$ and
 $0\leq \rho_0(x)\!:=\!\rho(x,0)\in L^1(\mathbb{R})$.
Set $E_0(s):=\displaystyle\int_{-\infty}^{s} \rho_0(w)dw$ as the corresponding initial electric field. Then,
$(\rho,\rho u)=(\partial_x \mass,\partial_x \moment)$  is a weak
solution of \eqref{EP1}, where $\mass(x,t)\equiv\mass(y(x,t))$ and
$\moment(x,t)\equiv\moment(y(x,t))$ are given by

\begin{equation}
\mass(x,t)=\int_0^{y(x,t)} \rho_0(s) ds, \qquad \moment(x,t)=\int_0^{y(x,t)} \Big(u_0(s)+\kappa
E_0(s)t\Big)\rho_0(s)  ds.\label{WSEPsmu}
\end{equation}
\end{theorem}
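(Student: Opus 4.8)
The plan is to verify the weak formulation directly by parametrizing everything through the Lagrangian label $s$ and tracking the map $s\mapsto X(s,t):=s+tu_0(s)+\tfrac12\kappa E_0(s)t^2$, which is the characteristic flow associated with the free quadratic motion $\ddot X=\kappa E_0(s)$. The key structural observation is that $Q_{x,t}(s)=X(s,t)-x$, so $y(x,t)$ is selected as the largest argmin of $s\mapsto\int_0^s Q_{x,t}\,d\rho_0$; since the integrand changes sign exactly where $X(s,t)=x$, the minimizer $y(x,t)$ is essentially the ``rightmost'' preimage of $x$ under $X(\cdot,t)$, with mass pushed forward so that $\rho_0$-null intervals are collapsed. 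First I would establish the basic regularity of $y(x,t)$: it is nondecreasing in $x$ (hence of bounded variation locally), and $\mass(x,t)=\int_0^{y(x,t)}\rho_0\,ds$, $\moment(x,t)=\int_0^{y(x,t)}(u_0+\kappa E_0 t)\rho_0\,ds$ are Lipschitz in $x$ with the right monotonicity so that $\rho=\partial_x\mass\ge0$ is a nonnegative measure. I would also need the companion identity $\partial_x E(x,t)=\rho(x,t)$ with $E(-\infty,t)=0$, which follows since $E(x,t)=\mass(x,t)=E_0(y(x,t))$ by the defining normalization $E_0(s)=\int_{-\infty}^s\rho_0$ (note $\mass$ as defined integrates from $0$, so one reconciles the additive constant).

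Next I would prove the two conservation laws in integrated (primitive) form. The natural route is a change of variables: for fixed $t$, push the measure $\rho_0\,ds$ forward by the map $s\mapsto X(s,t)$ to obtain $\rho(\cdot,t)$, and likewise $(u_0+\kappa E_0 t)\rho_0\,ds$ pushes forward to $\rho u(\cdot,t)$. Then for a test function $\varphi$ one computes $\int\varphi(x)\rho(x,t)\,dx=\int\varphi(X(s,t))\rho_0(s)\,ds$ and similarly for the momentum, after which $\frac{d}{dt}$ hits $\varphi$ via $\partial_t\varphi(X(s,t))=\varphi'(X(s,t))(u_0(s)+\kappa E_0(s)t)=\varphi'(X(s,t))\cdot\frac{(\rho u)}{\rho}$ along characteristics. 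This should reproduce $\int(\varphi_t\rho+\varphi_x\rho u)\,dx\,dt=0$ for the mass equation and, using $\frac{d}{dt}(u_0+\kappa E_0 t)=\kappa E_0(s)=\kappa E(X(s,t),t)$, the identity $\int(\varphi_t\rho u+\varphi_x\rho u^2)\,dx\,dt=-\int\kappa\rho E\,\varphi\,dx\,dt$ for the momentum equation — here one uses crucially that $E$ is constant along characteristics, $E(X(s,t),t)=E_0(s)$, which is exactly the content of Poisson's equation combined with mass conservation. The interchange of $\frac{d}{dt}$ and the $s$-integral is justified because $X(s,t)$ is smooth in $t$ and $\rho_0\in L^1$.

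The main obstacle is that the characteristic map $X(\cdot,t)$ is in general \emph{not} injective (this is precisely why shocks/$\delta$-shocks form), so ``push-forward along characteristics'' must be made rigorous in the many-to-one regime and one must check that selecting $y(x,t)$ as the \emph{supremum} of argmins is the correct selection that yields a genuine weak solution (as opposed to an arbitrary measurable selection). The way I would handle this: show that for a.e.\ $t$ the set of $x$ at which $X(\cdot,t)$ fails to be injective has measure zero in a suitable sense, so that $y(x,t)$ agrees a.e.\ with a measurable right-inverse; then prove that the variational/argmin characterization forces $y(\cdot,t)$ to be right-continuous and nondecreasing with jumps exactly at shock locations, and that at a jump the mass $\mass(y(x^+,t))-\mass(y(x^-,t))$ picked up is consistent with the Rankine–Hugoniot condition for the $\delta$-shock. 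Concretely, at a shock located at $x_*$ the jump in $y$ corresponds to an interval $[y_-,y_+]$ of labels that have all collapsed to $x_*$; the mass of the $\delta$-shock is $\int_{y_-}^{y_+}\rho_0\,ds$ and its momentum is $\int_{y_-}^{y_+}(u_0+\kappa E_0 t)\rho_0\,ds$, and one checks the speed $\dot x_*$ equals the mass-weighted average velocity — which is automatic from differentiating $\mass(x_*(t),t)$ and $\moment(x_*(t),t)$ in $t$ along the shock. Once the weak formulation is verified on test functions supported in spacetime including shock curves (using that finitely or countably many Lipschitz shock curves carry the singular part and the smooth part satisfies the PDE classically between them), the theorem follows. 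The remaining technical points — measurability of $(x,t)\mapsto y(x,t)$, the BV bound needed to make sense of $\partial_x\mass$, $\partial_x\moment$ as measures, and Fubini for the spacetime integral — are routine given $u_0\in C^1$ and $\rho_0\in L^1$.
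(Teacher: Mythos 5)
Your organizing idea --- push the measure $\rho_0\,ds$ forward by the \emph{free} characteristic flow $X(s,t)=s+tu_0(s)+\tfrac12\kappa E_0(s)t^2$ and then differentiate $\int\varphi(X(s,t))\rho_0(s)\,ds$ under the integral --- does not describe the object defined by \eqref{WSEPsmu}, and this is a genuine gap rather than a technicality to be patched. Once characteristics cross, the pushforward $X(\cdot,t)_{\#}(\rho_0\,ds)$ is the multivalued solution: in the two-Dirac-mass example of Section~\ref{CWeakS}, after the collision time $t_c$ the pushforward consists of two separated Dirac masses sitting at $X(y_1(0),t)$ and $X(y_2(0),t)$, whereas $\partial_x\mass$ from \eqref{WSEPsmu} is a single $\delta$-shock at the center of mass. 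The variational selection $y(x,t)$ is precisely what replaces the pushforward and implements sticking, so the identity $\int\varphi(x)\,d\rho(\cdot,t)=\int\varphi(X(s,t))\rho_0(s)\,ds$ on which your time-differentiation rests is simply false for $t>t_c$. The companion claim $E(X(s,t),t)=E_0(s)$ also fails after a shock forms: for a label $s$ in the collapsed interval $\big(y(z-,t),y(z,t)\big)$ the free characteristic $X(s,t)$ is not located at the shock, and $E$ \emph{at} the shock is given by the average rule \eqref{EPVx}, not by $E_0(s)$; what actually holds is the integrated identity $\int_{y(z-,t)}^{y(z,t)}\kappa E_0\rho_0\,ds=\kappa E(z,t)\cdot\!\int_{y(z-,t)}^{y(z,t)}\rho_0\,ds$, which your sketch never establishes.

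The paper proceeds entirely in Eulerian variables and avoids the pushforward issue. After Lemma~\ref{properQ} (monotonicity of $y(\cdot,t)$, which you do identify) and Lemma~\ref{entropy} (the one-sided Lipschitz / no-splitting property in the attractive case, which your proposal does not address but which is essential to rule out that a collapsed interval later re-expands), the proof decomposes $\mass_t$ and $\moment_x$ into absolutely continuous and singular measures: the regular part is handled by implicitly differentiating the constraint $Q_{x,t}(y(x,t))=0$, and the singular part by the distributional jump formula of Lemma~\ref{singularMeasure} plus the Rankine--Hugoniot-type computation showing the shock speed $\dot z(t)$ equals the mass-averaged velocity $\bar{u}(z,t)$. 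For the momentum equation there is a further nontrivial step you omit: one must show that the force primitive $Z_x=F=\int_0^{y(x,t)}\kappa\rho_0(s)E_0(s)\,ds$ equals the distributional product $\kappa\rho E$ with the averaged field, which is exactly where the factor $\tfrac12$ in \eqref{EPVx} is used. Your final paragraph (``countably many Lipschitz shock curves carry the singular part, smooth in between'') does gesture at this decomposition, and that piece is essentially the paper's argument. But to make your plan rigorous you must discard the free-flow pushforward framing, replace $X(\cdot,t)$ by the actual coalescing Lagrangian map, and prove the shock-speed and averaged-$E$ identities --- at which point you have reproduced the paper's Eulerian proof rather than given a genuinely distinct one.
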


\begin{nota}
The presence of Poisson potential is responsible for `converting'
the straight characteristics familiar from the Burgers' equation,
into parabola in Euler-Poisson equations. This is reflected in the
variational formula (\ref{WSEPsmu}) through the additional term
$\frac12 \kappa E_0(s)t^2$ in (\ref{EPysmu}).
\end{nota}

\begin{nota}
The representation formula \eqref{WSEPsmu} applies for all
$\kappa\in \mathbb{R}$. When $\kappa=0$, one recovers the
variational formulation of \cite{HW01} for the  pressureless Euler
equations. In the particular case of $\kappa=0$ and $\rho_0\equiv 1$, \eqref{EPysmu},\eqref{WSEPsmu}  become
\[
y(x,t) :=\sup\limits_{y}\bigg\{y \, \Big|
\, y = \arginf_y \int_0^y tu_0(s)ds+ \frac{(y-x)^2}{2}-\frac{x^2}{2}\bigg \},
\]
recovering the celebrated Lax-Oleinik formula.
\end{nota}

\begin{nota}
For $\kappa\leq 0$, our representation formula \eqref{WSEPsmu} gives
the ``sticky particle" solution of the pressureless Euler-Poisson
system. For $\kappa>0$, the weak solution given by (\ref{WSEPsmu})
is different from the one corresponding to the ``sticky particle"
model. We discuss the details in sections \ref{APEP} and \ref{RPEP}.
\end{nota}

We can extend the result of theorem \ref{IntroThm:1Ds} to more
general initial measure data, $\rho(x,0)\in \mathscr{M}_{+}(\mathbb{R})$ and
$m(x,0)=\big(\rho(x,0)u(x,0)\big)\in \mathscr{M}(\mathbb{R})$.

\begin{theorem}\label{IntroThm:1Dg}
[Measure initial density] Consider the pressureless Euler-Poisson
system (\ref{EP1}) subject to initial data,
$m(x,0)=\rho(x,0)u(x,0)\in \mathscr{M}(\mathbb{R})$ and
$\rho_0(x)\!:=\!\rho(x,0)\in \mathscr{M}_{+}(\mathbb{R})$, such that
$\int_{-\infty}^{\infty}\rho_0(x)dx<\infty$ and $u_0(x)$ is
piecewise continuous.
The  corresponding initial electric
field, $E_0$, is given  by the average rule
%\begin{subequations}
\begin{equation}\label{E0s}
E_0(s):=\frac12\bigg(\displaystyle\int_{-\infty}^{s-}\rho_0(w)dw+\displaystyle\int_{-\infty}^{s+}
\rho_0(w)dw\bigg),
\end{equation}
Then, $(\rho,\rho u)=(\partial_x \mass,\partial_x \moment)$ is a
weak solution of (\ref{EP1}), where $\mass\equiv\mass(y(x,t))$ and
$\moment\equiv\moment(y(x,t))$ are given by
\begin{subequations}\label{eqs:RM0s}
\begin{equation}
\mass(x,t)=\left\{\begin{array}{ll}\displaystyle \int_0^{y(x,t)+} \rho_0(s) ds, & \ {\rm if} \ Q_{x,t}(y(x,t))\leq0,\\ \\
\displaystyle \int_0^{y(x,t)-} \rho_0(s) ds, & \ {\rm if} \
Q_{x,t}(y(x,t))> 0, \end{array}\right.
\end{equation}
and
\begin{equation}
\moment(x,t)= \left\{\begin{array}{ll}\displaystyle \int_0^{y(x,t)+}
\Big(u_0(s)+\kappa
E_0(s)t\Big)\rho_0(s)  ds, & \ {\rm if} \ Q_{x,t}(y(x,t))\leq0,\\ \\
\displaystyle \int_0^{y(x,t)-} \Big(u_0(s)+\kappa
E_0(s)t\Big)\rho_0(s)  ds, & \ {\rm if} \ Q_{x,t}(y(x,t))> 0.
\end{array}\right.
\end{equation}
\end{subequations}
\end{theorem}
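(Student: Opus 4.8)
The plan is to reduce the measure-data case to the $L^1$-data case of Theorem \ref{IntroThm:1Ds} by a regularization/approximation argument, and to verify that the one-sided limits in \eqref{eqs:RM0s} are exactly what survives in the limit. First I would introduce mollified data $\rho_0^\varepsilon = \rho_0 * \phi_\varepsilon \ge 0$ with $\rho_0^\varepsilon \in L^1(\mathbb{R})\cap C^1$, $\int \rho_0^\varepsilon = \int\rho_0 <\infty$, together with a $C^1$ velocity $u_0^\varepsilon$ that converges to $u_0$ at every point of continuity and stays uniformly bounded on the support where $\rho_0$ has mass. The associated initial field $E_0^\varepsilon(s) = \int_{-\infty}^s \rho_0^\varepsilon$ then converges pointwise to the averaged $E_0$ of \eqref{E0s} at every $s$: at a point where $\rho_0$ carries an atom, the mollification spreads the jump symmetrically, which is precisely why the average rule \eqref{E0s}, rather than the left- or right-continuous representative, is the correct limit — this is the structural reason the factor $\tfrac12$ appears. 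By Theorem \ref{IntroThm:1Ds}, for each $\varepsilon>0$ we have a weak solution $(\rho^\varepsilon,\rho^\varepsilon u^\varepsilon) = (\partial_x \mass^\varepsilon,\partial_x \moment^\varepsilon)$ with $\mass^\varepsilon,\moment^\varepsilon$ given by \eqref{WSEPsmu} in terms of the minimizer $y^\varepsilon(x,t)$ of $\int_0^y Q^\varepsilon_{x,t}(s)\rho_0^\varepsilon(s)\,ds$.

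The core step is a convergence/stability analysis of the variational problem \eqref{EPysmu} as $\varepsilon\to 0$. Here I would use that $F^\varepsilon(y):=\int_0^y Q^\varepsilon_{x,t}(s)\rho_0^\varepsilon(s)\,ds$ converges locally uniformly in $y$ to $F(y):=\int_0^y Q_{x,t}(s)\rho_0(s)\,ds$ for each fixed $(x,t)$ — which follows from $Q^\varepsilon_{x,t}\to Q_{x,t}$ pointwise and boundedly together with the weak-$*$ convergence $\rho_0^\varepsilon\,ds \rightharpoonup \rho_0\,ds$ and an equi-integrability estimate near $\pm\infty$ coming from $\int\rho_0<\infty$. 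Since $Q_{x,t}$ is continuous and strictly increasing in $s$ (it is $s$ plus a nondecreasing term plus lower-order pieces), the limit functional $F$ is convex-like in the relevant sense: the set of its argmin coincides with $\{y : Q_{x,t}(s)\le 0 \text{ for } s<y,\ \ge 0 \text{ for } s>y\}$ up to the indeterminacy caused by atoms of $\rho_0$, and the supremum selection $y(x,t)=\sup\{\arginf\}$ picks the canonical largest such point. A standard $\Gamma$-convergence / stability-of-minimizers argument then gives $y^\varepsilon(x,t)\to y(x,t)$ at every $(x,t)$ where $y(\cdot,t)$ is continuous, i.e.\ for a.e.\ $x$ and all $t$; the sup-selection is exactly what guarantees one recovers this particular minimizer in the limit and resolves the ambiguity when $y(x,t)$ sits inside a plateau of $Q_{x,t}$ of positive $\rho_0$-mass.

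Next I would pass to the limit in the mass and momentum. On $\{Q_{x,t}(y(x,t))<0\}$ the minimizer lies strictly to the right of the plateau, so $y^\varepsilon\to y$ with the relevant integrals converging to $\int_0^{y+}\rho_0$ and $\int_0^{y+}(u_0+\kappa E_0 t)\rho_0$ (the mollified mass up to $y^\varepsilon$ sweeps past any atom sitting at $y$); on $\{Q_{x,t}(y(x,t))>0\}$ one gets the left limits instead; the measure-zero boundary set $\{Q_{x,t}(y(x,t))=0\}$ does not affect the distributional derivatives. Thus $\mass^\varepsilon(\cdot,t)\to\mass(\cdot,t)$ and $\moment^\varepsilon(\cdot,t)\to\moment(\cdot,t)$ pointwise a.e.\ and boundedly in $x$, with uniform-in-$\varepsilon$ total-variation bounds (both are monotone-type functions controlled by $\int\rho_0$ and $\sup|u_0|$, $|\kappa| t\int\rho_0$). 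Finally, since $(\rho^\varepsilon,\rho^\varepsilon u^\varepsilon)$ satisfies the weak formulation of \eqref{EP1} — i.e.\ for all test functions $\varphi$, $\iint (\mass^\varepsilon\varphi_t + \text{flux terms})\,=\,\text{boundary}$, written so that only $\mass^\varepsilon,\moment^\varepsilon$ and the field $E^\varepsilon=\mass^\varepsilon$ (up to constants) appear, not their $x$-derivatives — I can pass to the limit under the integral sign by dominated convergence, using the pointwise a.e.\ convergence and the uniform bounds, and the nonlinear term $(\rho u^2)_x$ handled as in the $L^1$ proof via its primitive. This yields that $(\rho,\rho u)=(\partial_x\mass,\partial_x\moment)$ with the formulas \eqref{eqs:RM0s} is a weak solution of \eqref{EP1}.

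The main obstacle I anticipate is the stability of the argmin selection at atoms of $\rho_0$: one must show that the sup-selected minimizer $y^\varepsilon(x,t)$ of the mollified problem converges to the sup-selected minimizer of the limit problem, and that the correct one-sided limit (left vs.\ right) is chosen according to the sign of $Q_{x,t}(y(x,t))$ — equivalently, that in the limit the atom's entire mass is either swept into $\mass$ or left out, with no partial contribution, except on the negligible set where $Q_{x,t}(y)=0$. Making this dichotomy precise, uniformly enough in $(x,t)$ to justify the dominated-convergence passage in the weak formulation, is the delicate part; everything else is the $L^1$ theorem plus routine approximation estimates.
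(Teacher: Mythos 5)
Your plan is a genuinely different route from the paper's: the paper treats measure data by extending the \emph{direct} verification of the weak form given in the proof of Theorem~\ref{IntroThm:1Ds} --- the singular-part machinery of Lemmas~\ref{properQ}--\ref{singularMeasure} already copes with $\delta$-shocks once they form, so allowing atoms in the \emph{initial} data is a small adaptation, which is what the paper's remark on ``the same technique'' is pointing at --- whereas you propose to mollify, invoke Theorem~\ref{IntroThm:1Ds}, and pass to the limit. That strategy could in principle work, and it does give a clean structural explanation of why the averaged $E_0$ in~(\ref{E0s}) is the correct field.

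There is, however, a concrete error at the heart of your limit passage. You assert that $Q_{x,t}(s)=s+tu_0(s)+\tfrac12\kappa E_0(s)t^2-x$ is ``continuous and strictly increasing in $s$,'' and from this that the argmin of $F(y)=\int_0^y Q_{x,t}\rho_0$ has a simple sign-crossing characterization. Neither is true: $u_0$ is only piecewise continuous and generically non-monotone, $E_0$ jumps at every atom of $\rho_0$, and for $\kappa<0$ the term $\tfrac12\kappa E_0(s)t^2$ is nonincreasing. Worse, if $Q_{x,t}(\cdot)$ were monotone in $s$ for all $(x,t)$, the characteristics $s\mapsto s+tu_0(s)+\tfrac12\kappa E_0(s)t^2$ would never cross, no $\delta$-shock would ever form, and the theorem would be vacuous --- the whole content of the variational selection is precisely the non-monotone case, where the global minimizer of $F$ need not sit where $Q_{x,t}$ changes sign. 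Consequently your ``standard $\Gamma$-convergence / stability-of-minimizers'' step and the claim $y^\varepsilon(x,t)\to y(x,t)$ are not justified as stated. Note also that the sup-selection of the argmin is not stable under generic perturbation: a plateau of $F$ can be turned by the mollification into a strict interior minimum, moving $y^\varepsilon$ away from the sup-selected endpoint, so you cannot appeal to an abstract stability principle. To repair the argument you would have to use the specific structure --- the monotonicity of $y(\cdot,t)$ (Lemma~\ref{properQ}), the one-sided Lipschitz selection (Lemma~\ref{entropy}), and the fact that a symmetric mollifier crosses an atom evenly so that $E_0^\varepsilon\to E_0$ with the $\tfrac12$-average --- and you would also need to mollify $m_0=\rho_0 u_0$ jointly with $\rho_0$ (not $u_0$ separately) to avoid misassigning momentum at a point where $\rho_0$ has an atom and $u_0$ a jump. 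Once these are in place, the remaining steps (a.e.\ convergence of $\mass,\moment$, dominated convergence in the weak formulation with $\rho u^2$ handled via its primitive) are routine.
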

We note in passing that since   $y(\cdot,t)$ is monotonically increasing, consult lemma \ref{properQ} below, the one-sided limits in (\ref{eqs:RM0s}), $y(\cdot,t)\pm$, are well-defined.

These  results  can be extended to the \emph{weighted}
multi-dimensional Euler/Euler-Poisson systems with symmetry, which
is the content of our third main theorem.

\begin{theorem}\label{IntroThm:MEP}
Consider the $n$-dimensional weighted Euler-Poisson equations
\begin{subequations}\label{MultiDEP}
\begin{equation}\label{MultiDrho}
\rho_t + \nabla \cdot(\rho \mathbf{u})=0, \quad
\mathbf{u}(\cdot,t): \mathbb{R}^n \mapsto \mathbb{R}^n,
\end{equation}
\begin{equation}\label{MultiDu}
(\rho\mathbf{ u})_t+\nabla\cdot(\rho\mathbf{u} \otimes \mathbf{u})=
\kappa\rho|\mathbf{x}|^{n-1}\nabla V, \quad \Delta V=\rho,
\end{equation}
\end{subequations}
subject to spherically symmetric initial data,
$\displaystyle \rho_0(\mathbf{x})\mathbf{u}_0(\mathbf{x})\!:=\!
\rho_0(|\mathbf{x}|)u_0(|\mathbf{x}|)\frac{\mathbf{x}}{|\mathbf{x}|}$,
such that\newline  $0\leq \rho(\mathbf{x},0)=\rho_0(|\mathbf{x}|)\in
L^1(\mathbb{R}^n)$, %$\displaystyle \int_{0}^\infty \rho_0(r)r^{n-1}dr<\infty$
and ${u}_0(|\mathbf{x}|)\in C^1(\mathbb{R}_+)$.
Then the system (\ref{MultiDEP}) admits a
 weak radial solution, $(\rho,\rho \mathbf{u})$, of the form
\begin{subequations}
\begin{equation}
\begin{aligned}
\rho(\mathbf{x},t)= \displaystyle\frac{1}{r^{n-1}}\frac{\partial
\mass(r,t)}{\partial r}, \qquad
\rho(\mathbf{x},t)\mathbf{u}(\mathbf{x},t)=\displaystyle\frac{\mathbf{x}}{r}\frac{1}{r^{n-1}}\frac{\partial
\moment(r,t)}{\partial r},  \qquad r:=|\mathbf{x}|.
\end{aligned}
\end{equation}
Here, $\mass(r,t)\equiv\mass(y(r,t))$ and $\moment(r,t)\equiv\moment(y(r,t))$ are given by,
\begin{align}
\mass(r,t)=\displaystyle\int_0^{y(r,t)} \rho_0(s)s^{n-1} ds, \qquad
\moment(r,t)&=\displaystyle\int_0^{y(r,t)}
\rho_0(s)\Big(w_0(s)+\kappa E_0(s)t\Big)s^{n-1} ds,
\end{align}
where  $E_0$ is the initial electric field,
\begin{equation}
\displaystyle E_0(r):= r^{n-1}V_r(|{\mathbf x}|,0)=  \int_0^{r} s^{n-1}\rho_0(s)ds, \quad r=|\mathbf{x}|,
\end{equation}
and $y=y(r,t)$ is determined in terms of the quadratic form $Q_{r,t}(s)=s+tu_0(s)+\frac12 \kappa E_0(s)t^2-r$,
\begin{equation}
 y(r,t) :=\sup\limits_{y}\bigg\{y \, \Big|
\, y = \arginf_y \int_0^y Q_{r,t}(s)s^{n-1}\rho_0(s)ds \bigg \}.
\end{equation}
\end{subequations}
\end{theorem}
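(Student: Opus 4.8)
The plan is to reduce the $n$-dimensional radial problem to the one-dimensional weighted problem already solved in Theorems \ref{IntroThm:1Ds}--\ref{IntroThm:1Dg}, by introducing the radial variable $r=|\mathbf{x}|$ and the scalar radial velocity $u_0$ defined through $\mathbf{u}_0(\mathbf{x})=u_0(|\mathbf{x}|)\mathbf{x}/|\mathbf{x}|$. First I would substitute a radial ansatz $\rho(\mathbf{x},t)=\rho(r,t)$, $\mathbf{u}(\mathbf{x},t)=u(r,t)\mathbf{x}/r$ into \eqref{MultiDrho}--\eqref{MultiDu} and carry out the standard computation of $\nabla\cdot(\rho\mathbf{u})$ and $\nabla\cdot(\rho\mathbf{u}\otimes\mathbf{u})$ in spherical coordinates; the divergence structure produces the factors $r^{n-1}$, and the Poisson equation $\Delta V=\rho$ integrates (using radial symmetry and Gauss's law) to $r^{n-1}V_r(r,t)=\int_0^r s^{n-1}\rho(s,t)\,ds$, which motivates the definition of $E_0(r)$. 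The key algebraic observation is that if one sets $\widetilde\rho(r,t):=r^{n-1}\rho(r,t)$ and $\widetilde m(r,t):=r^{n-1}\rho(r,t)u(r,t)$, then the radial system becomes exactly the $1$-D pressureless Euler-Poisson system \eqref{EP1} for the pair $(\widetilde\rho,\widetilde m)$ with the weight $s^{n-1}$ absorbed into the ``density'' — that is, the weighted $1$-D problem with initial data $\widetilde\rho_0(s)=s^{n-1}\rho_0(s)$, initial velocity $u_0(s)$, and the \emph{same} forcing term $\kappa\widetilde\rho E_0$, where $E_0(r)=\int_0^r s^{n-1}\rho_0(s)\,ds$ plays the role of the primitive $\int_0^r\widetilde\rho_0$.

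Once this reduction is in place, I would invoke Theorem \ref{IntroThm:1Ds} verbatim for the transformed data: its hypotheses require $\widetilde\rho_0\in L^1$ and $u_0\in C^1$, which follow from $\rho_0\in L^1(\mathbb{R}^n)$ (since $\int_{\mathbb{R}^n}\rho_0\,d\mathbf{x}=\omega_{n-1}\int_0^\infty s^{n-1}\rho_0(s)\,ds$, so $s^{n-1}\rho_0\in L^1(\mathbb{R}_+)$) and $u_0\in C^1(\mathbb{R}_+)$. The $1$-D theorem then yields that $(\widetilde\rho,\widetilde m)=(\partial_r\mass,\partial_r\moment)$ is a weak solution, with $\mass,\moment$ given precisely by the stated formulas and with $y(r,t)$ the largest minimizer of $\int_0^y Q_{r,t}(s)s^{n-1}\rho_0(s)\,ds$, since $Q_{r,t}$ here is the exact analogue of $Q_{x,t}$ with $x\rightsquigarrow r$ and the weighted density. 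Translating back, $\rho=r^{-(n-1)}\partial_r\mass$ and $\rho\mathbf{u}=(\mathbf{x}/r)r^{-(n-1)}\partial_r\moment$, which is the claimed representation.

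The step I expect to be the main obstacle is verifying that a weak solution of the reduced $1$-D weighted system genuinely lifts to a weak solution of the $n$-dimensional system \eqref{MultiDEP} in the sense of distributions on $\mathbb{R}^n\times\mathbb{R}_+$ — not merely a formal match of the radial PDEs. Concretely, one must check that for every test function $\phi\in C_c^\infty(\mathbb{R}^n\times[0,\infty))$, the weak formulation of \eqref{MultiDrho}--\eqref{MultiDu} reduces, after passing to spherical coordinates and using that $\mathbf{u}$ points radially, to the weak formulation of the $1$-D weighted system tested against the induced radial test function $s\mapsto\int_{S^{n-1}}\phi(s\omega,t)\,d\sigma(\omega)$; here one must be careful that this induced test function, while smooth away from $r=0$, has the right behavior as $r\to0^+$ so that no spurious boundary contribution at the origin appears, and that the singular factor $r^{-(n-1)}$ in $\rho$ is integrable against the weight $r^{n-1}$ coming from the volume element $r^{n-1}dr\,d\sigma$. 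I would handle this by a density/approximation argument: cut off a neighborhood of the origin, verify the identity on $\{r>\epsilon\}$ using the $1$-D result, and send $\epsilon\to0$, controlling the error using $\widetilde\rho_0\in L^1$ and the explicit bounded-variation structure of $\mass,\moment$ in $r$. The treatment of the $\delta$-shock formation at $r=0$ (mass concentrating at the origin in the attractive case) requires this care, but is otherwise parallel to the $1$-D argument.
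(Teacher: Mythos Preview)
Your proposal is correct and follows essentially the same route as the paper: substitute the radial ansatz, multiply the resulting radial equations by $r^{n-1}$ so that $\varsigma:=r^{n-1}\rho$ and $\varsigma w$ satisfy a one-dimensional pressureless Euler--Poisson system with $E_r=\varsigma$, then invoke the $1$-D representation theorem and translate back. One device the paper uses that you do not mention is to extend the half-line data to all of $\mathbb{R}$ by even reflection of $\varsigma$ and odd reflection of $w$; this puts the problem exactly into the setting of Theorems~\ref{IntroThm:1Ds}--\ref{IntroThm:1Dg} on the full line, with the electric field taken as $E(r,t)=\tfrac12\big(\int_0^{r-}\varsigma+\int_0^{r+}\varsigma\big)$ (the shift from $-\infty$ to $0$ being a harmless Galilean change), and the particle at the origin then stays fixed by symmetry. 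This reflection trick replaces your proposed $\epsilon$-cutoff argument at $r=0$. Conversely, your discussion of lifting the $1$-D weak solution to a genuine distributional solution on $\mathbb{R}^n$ via spherical averaging of test functions is more careful than the paper, which simply asserts the recovery step without further comment.
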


\begin{nota}
If $\kappa=0$, theorem \ref{IntroThm:MEP} yields the  formula for  a
global weak solution of the multi-dimensional pressureless Euler
system with symmetry. If $n=1$, it recovers the formula for the 1D
pressureless Euler-Poisson system, that is, theorem
\ref{IntroThm:1Ds} with a anti-symmetric $E_0$.
\end{nota}

The paper is organized as follows. In section \ref{APEP}, we
develop the formula for the attractive pressureless Euler-Poisson
system. In section \ref{RPEP}, we extend the formula to the
repulsive system, and explain different physical meanings of the
weak solutions for $\kappa<0$ and $\kappa>0$. In section \ref{MS},
we extend the formula to the multidimensional weighted Euler-Poisson system. 

%%%%%%%%%%%%%%%%%%%%%%%%%%%%%%%%%%%%%%%%%%%%%%%%%%%%%%%%%%%%%%%%%%%%%%%%%%%%%%%%%%%%%%%%%
\section{Weak solutions of the attractive pressureless Euler-Poisson system}\label{APEP}
%%%%%%%%%%%%%%%%%%%%%%%%%%%%%%%%%%%%%%%%%%%%%%%%%%%%%%%%%%%%%%%%%%%%%%%%%%%%%%%%%%%%%%%%%
\setcounter{figure}{0} \setcounter{equation}{0}

%%%%%%%%%%%%%%%%%%%%%%%%%%%%%%%%%%%%%%%%%%%%%%%%%%%%%%%%%%%%%%%%%%%%%%%%%%%%%%%%%%%%%%%%%%
\subsection{From continuum to particles} \label{DefEF}
%%%%%%%%%%%%%%%%%%%%%%%%%%%%%%%%%%%%%%%%%%%%%%%%%%%%%%%%%%%%%%%%%%%%%%%%%%%%%%%%%%%%%%%%%%

We begin with the transport of the center of mass in the smooth case.

\begin{lemma}\label{lemma:TCG}
Assume that $(\rho(\cdot,t),\rho u(\cdot,t))$ is a smooth solution
of the pressureless Euler-Poisson system (\ref{EP1})  for
$t\in[0,T]$. Let $x(\alpha,t)$ denote the particle path emanating
from $\alpha\in \mathbb{R}$. Then the center of mass of particles
emanating from the interval $[a,b]$ is given by the
parabola
\begin{equation}
X(t)=X_0+U_0 t+\displaystyle\frac12 \kappa \widetilde{E_0}t^2, \qquad
t\in[0,T]
\end{equation}
where
\begin{equation}
\displaystyle X_0=\frac{\displaystyle\int_a^b x\rho_0(x)dx}{\displaystyle\int_a^b\rho_0(x)dx},
\quad U_0=\displaystyle\frac{\displaystyle\int_a^b
\rho_0(x)u_0(x)dx}{\displaystyle\int_a^b\rho_0(x)dx}, \quad \widetilde{E_0}=\frac12
\Big(\int_{-\infty}^a \rho_0(x)dx+\int_{-\infty}^b\rho_0(x)dx\Big).
\end{equation}
\end{lemma}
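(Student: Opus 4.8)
The plan is to pass to Lagrangian coordinates along the particle paths and to exploit two conservation laws hidden in \eqref{EP1}: the total mass carried by the family of particles emanating from $[a,b]$, and the mass lying to the left of each individual particle path. Let $x(\alpha,t)$ solve $\dot x(\alpha,t)=u(x(\alpha,t),t)$ with $x(\alpha,0)=\alpha$. Since the solution is smooth on $[0,T]$, the map $\alpha\mapsto x(\alpha,t)$ is a $C^1$ diffeomorphism, and the Lagrangian form of \eqref{EP-mass}, namely $\rho(x(\alpha,t),t)\,\partial_\alpha x(\alpha,t)=\rho_0(\alpha)$, converts the Eulerian integrals defining $X(t)$ into integrals over the fixed interval $[a,b]$ against the frozen weight $\rho_0(\alpha)\,d\alpha$. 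In particular $\int_{x(a,t)}^{x(b,t)}\rho(x,t)\,dx=\int_a^b\rho_0(\alpha)\,d\alpha=:m$ is constant in $t$, so
\[
X(t)=\frac1m\int_a^b x(\alpha,t)\rho_0(\alpha)\,d\alpha .
\]
Differentiating in $t$ and using $\dot x=u$ gives $\dot X(t)=\frac1m\int_a^b u(x(\alpha,t),t)\rho_0(\alpha)\,d\alpha$, which at $t=0$ equals $U_0$, identifying the linear coefficient.

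For the second derivative I use that expanding \eqref{EP-moment} and subtracting $u$ times \eqref{EP-mass} yields, on the set where $\rho>0$, the forced Burgers relation $u_t+uu_x=\kappa E$. Hence $\ddot x(\alpha,t)=\frac{d}{dt}u(x(\alpha,t),t)=\kappa E(x(\alpha,t),t)$, and returning to Eulerian variables,
\[
\ddot X(t)=\frac{\kappa}{m}\int_a^b E(x(\alpha,t),t)\rho_0(\alpha)\,d\alpha=\frac{\kappa}{m}\int_{x(a,t)}^{x(b,t)} E(x,t)\rho(x,t)\,dx .
\]
Since $E_x=\rho$, the integrand is $\frac12\partial_x(E^2)$, so the integral telescopes to a boundary term: $\ddot X(t)=\frac{\kappa}{2m}\big(E(x(b,t),t)^2-E(x(a,t),t)^2\big)$.

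The final ingredient is that the mass to the left of any particle path is frozen: from \eqref{EP-mass} together with $\rho u\to 0$ as $x\to-\infty$ (legitimate since $\rho_0\in L^1$ and the flow is smooth), one gets $\frac{d}{dt}\int_{-\infty}^{x(\alpha,t)}\rho(w,t)\,dw=0$, hence $E(x(\alpha,t),t)=\int_{-\infty}^{\alpha}\rho_0(w)\,dw=:E_0(\alpha)$ for every $\alpha$ and every $t\in[0,T]$. Using $E_0(b)-E_0(a)=m$ we then obtain
\[
\ddot X(t)=\frac{\kappa}{2m}\big(E_0(b)^2-E_0(a)^2\big)=\frac{\kappa}{2}\big(E_0(a)+E_0(b)\big)=\kappa\widetilde{E_0},
\]
a constant. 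Integrating twice with the initial data $X(0)=X_0$ and $\dot X(0)=U_0$ gives $X(t)=X_0+U_0t+\frac12\kappa\widetilde{E_0}t^2$, as claimed.

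I expect the only real care to go into the bookkeeping of the change of variables and the two conservation identities: justifying that $\alpha\mapsto x(\alpha,t)$ is a $C^1$ diffeomorphism on $[0,T]$, that no mass crosses a particle path, and that the decay of $\rho_0$ kills the boundary contribution at $-\infty$; the differentiations and the algebra are then immediate. There is no genuine obstacle in the smooth regime — the point of the lemma is precisely that the acceleration of the center of mass "sees" only the frozen exterior mass, and only through the symmetric average $\widetilde{E_0}=\frac12(E_0(a)+E_0(b))$, which is exactly what the telescoping of $\frac12\partial_x(E^2)$ produces.
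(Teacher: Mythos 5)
Your argument is correct and rests on the same three pillars as the paper's proof: the Lagrangian change of variables $\rho(x(\alpha,t),t)\,\partial_\alpha x=\rho_0(\alpha)$, the conservation of the electric field along particle paths, and the identity $E\rho=\tfrac12\partial_x(E^2)$ (equivalently $\int E_0\,dE_0=\tfrac12(E_0(b)^2-E_0(a)^2)$). The only organizational difference is that you differentiate $X$ twice and show $\ddot X$ is constant rather than, as the paper does, first solving the characteristic ODEs to get the explicit parabola $x(\alpha,t)=\alpha+u_0t+\tfrac12\kappa E_0 t^2$ and then substituting into the definition of $X(t)$; this is a cosmetic rearrangement of the same computation.
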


\begin{proof}
We use the method of characteristics to obtain an explicit solution
of (\ref{EP1}), \cite{EnLT01}. Along the particle trajectory, the
equations of $x$ and $u$ are
\begin{subequations}
\begin{equation}\label{clx}
\displaystyle\frac{dx(\alpha,t)}{dt}=u(x(\alpha,t),t), \quad
x(\alpha,0)=\alpha,
\end{equation}
and
\begin{equation}\label{clu}
\displaystyle\frac{du(x(\alpha,t),t)}{dt}=\kappa
V_x(x(\alpha,t),t)=\kappa E(x(\alpha,t),t),
\end{equation}
\end{subequations}
where
\begin{equation}
E(x(\alpha,t),t)=\int_{-\infty}^{x(\alpha,t)} \rho(\xi)d\xi.
\end{equation}
Since
\[ \left. \begin{array}{lll}
\displaystyle\frac{d}{dt} E\Big(x(\alpha,t),t\Big) & = &
\displaystyle\frac{d}{dt}\Big(x(\alpha,t)\Big)\cdot
\rho\Big(x(\alpha,t),t) \Big) +
\displaystyle\int_{-\infty}^{x(\alpha,t)} \rho_t(\xi,t)d\xi \vspace{.1in}\\
& = & u(\alpha,t)\rho\Big(x(\alpha,t),t) \Big) - \displaystyle\int_{-\infty}^{x(\alpha,t)}
\Big(\rho(\xi,t)u(\xi,t)\Big)_{\xi}d\xi =0,\\
\end{array} \right. \]
the electric field remains constant along $x(\alpha,t)$. Let
\[ E_0(\alpha)=\int_{-\infty}^{x(\alpha,0)} \rho_0(s)ds, \quad
u_0=u(x(\alpha,0),0), \quad \rho_0=\rho(x(\alpha,0),0). \]
Then
(\ref{clu}) can be simplified to
\[
\displaystyle\frac{du(x(\alpha,t),t)}{dt}=\kappa E_0,
\]
which yields
\begin{equation}\label{solu-u}
u(x(\alpha,t),t)=u_0+\kappa E_0t.
\end{equation}
This together with (\ref{clx}) yield the equation of the particle
path
\begin{equation}\label{solu-x}
x(\alpha,t)=\alpha+u_0t+\displaystyle\frac{\kappa E_0 t^2}{2}.
\end{equation}
Let
\[
\Gamma(\alpha,t):=\displaystyle\frac{\partial x}{\partial
\alpha}=1+u_0't+\displaystyle\frac{\kappa \rho_0 t^2}{2}, \qquad
u_0':=\displaystyle\frac{\partial u_0(\alpha)}{\partial \alpha}
\]
Taking the $x$ derivative of $u(x(\alpha,t),t)$ yields
\begin{equation}\label{clux}
u_x(x(\alpha,t),t)=\displaystyle\frac{\partial u}{\partial
\alpha}/\displaystyle\frac{\partial x}{\partial
\alpha}=\displaystyle\frac{u_0'+\kappa\rho_0
t}{1+u_0't+\displaystyle\frac{\kappa\rho_0
t^2}{2}}=\displaystyle\frac{\Gamma_t(\alpha,t)}{\Gamma(\alpha,t)}.
\end{equation}
Plugging (\ref{clux}) into the mass equation (\ref{EP-mass}) yields
\[
\displaystyle\frac{d}{dt}\rho(x(\alpha,t),t)=
-u_x\rho=-\displaystyle\frac{\Gamma_t(\alpha,t)}{\Gamma(\alpha,t)}\rho(x(\alpha,t),t).
\]
Solving this equation, we obtain
\begin{equation}\label{solu-rho}
\rho(x(\alpha,t),t)=\displaystyle\frac{\rho_0}{\Gamma(\alpha,t)}.
\end{equation}
Equipped with (\ref{solu-u}), (\ref{solu-x}) and (\ref{solu-rho}),
we can find the position $X(t)$ of the center of gravity of the mass
on $[x(a,t),x(b,t)]$ at time $t$
\begin{eqnarray*}
 X(t) & = &
\displaystyle\frac{\displaystyle\int_{x(a,t)}^{x(b,t)} \xi \rho(\xi,t)
d\xi}{ \displaystyle\int_{x(a,t)}^{x(b,t)} \rho(\xi,t) d\xi}
 =  \displaystyle\frac{\displaystyle\int_{a}^{b} x(\alpha,t)
\rho(x(\alpha,t),t) \Gamma(\alpha,t) d\alpha}{
E_0(b)-E_0(a)}  \\
& = & \displaystyle\frac{\displaystyle\int_{a}^{b}
\Big(\alpha+u_0(\alpha)t+\displaystyle\frac{\kappa E_0(\alpha)
t^2}{2}\Big) \rho_0(\alpha)
 d\alpha}{
E_0(b)-E_0(a)}
 =  X_0+U_0 t+ \displaystyle\frac{\kappa t^2
\displaystyle\int_{a}^{b}
 E_0(\alpha)  \rho_0(\alpha)
 d\alpha}{
2(E_0(b)-E_0(a))} \\
& = & X_0+U_0 t+ \displaystyle\frac{\kappa t^2
\displaystyle\int_{a}^{b}
 E_0(\alpha)
 dE_0(\alpha)}{
2(E_0(b)-E_0(a))}
 =  X_0+U_0 t+ \displaystyle\frac{\kappa t^2 (E_0(b)^2-E_0(a)^2)}{
4(E_0(b)-E_0(a))} \\
 &= &  X_0+U_0 t+ \displaystyle\frac{1}{2}\kappa \widetilde{E_0}t^2. \\
\end{eqnarray*}
\end{proof}

Lemma \ref{lemma:TCG} tells us that if  we replace the initial mass
along $[a,b]$  by a Dirac mass, $\int_a^b \rho_0(x) dx$, situated at
location $X_0$, give it velocity $U_0$, and apply to this particle an electrical field
 $\widetilde{E_0}$, then the trajectory
of this particle is the same as the trajectory of the center of mass
of the whole initial interval $[a,b]$. Motivated by this lemma, we continue to
deal with measure densities. In particular,  if the density $\rho(\cdot,t)$ at location $x$ has  a Dirac mass with strength
$\overline{m}(x)$, then we set the electric field at that point to be
\[
E(x,t):=\displaystyle\int_{-\infty}^{x-}
\rho(s,t)ds+\frac{\overline{m}(x)}{2};
\]
 otherwise, $E(x,t):=\displaystyle\int_{-\infty}^x \rho(s,t)ds$.
Combining both cases, we arrive at the following definition of the electric field, which is in agreement with (\ref{E0s}).
\begin{definition}\label{def:EPVx}[The electric field]
We define the electric field, $E(x,t)$, as
\begin{equation}\label{EPVx}
E(x,t):=\frac12\bigg(\displaystyle\int_{-\infty}^{x-} \rho(s,t)ds+
\displaystyle\int_{-\infty}^{x+} \rho(s,t)ds \bigg),
\end{equation}
\end{definition}

Equipped with definition \ref{def:EPVx}, we will show later that the conclusion of Lemma \ref{lemma:TCG} remains valid even
after collision takes place. Therefore, setting $E$ as in (\ref{EPVx}) guarantees that
the trajectory of the center of mass is independent of whether there
is a collision or not. This is a key point which explains the validity behind our approach.

%%%%%%%%%%%%%%%%%%%%%%%%%%%%%%%%%%%%%%%%%%%%%%%%%%%%%%%%%%%%%%%%%%%%%%%%%%%%%%%%%%%%%%%%%%%%%%%%%%%%%%%%%%
\subsection{The dynamics of two Dirac masses} \label{CWeakS}
%%%%%%%%%%%%%%%%%%%%%%%%%%%%%%%%%%%%%%%%%%%%%%%%%%%%%%%%%%%%%%%%%%%%%%%%%%%%%%%%%%%%%%%%%%%%%%%%%%%%%%%%%%

Given the density $\rho(\cdot,t)$ as a non-negative measure and the velocity $u(\cdot,t)$ which is uniformly bounded, we introduce the corresponding \emph{mass} and \emph{momentum} of the system, which play an important role throughout the paper,
\begin{equation}
\mass(x,t):=\int_{-\infty}^{x+} \rho(s,t) ds, \qquad
\moment(x,t):=\int_{-\infty}^{x+} \rho(s,t)u(s,t)ds
\end{equation}

To illustrate the  construction of a weak solution solely
from the physical principles, we start with the simplest example of
two particles governed by an attractive force, $\kappa\leq 0$.
We consider  two particles with masses $m_1,m_2$ at initial positions
$y_1(0) < y_2(0)$, and initial velocity $u_1(0), u_2(0)$,
respectively. Thus, the initial density and momentum consist of two Dirac masses
\begin{eqnarray*}
\rho_0(x)&=&m_1\delta(x-y_1(0))+m_2\delta(x-y_2(0)),\\
\rho_0(x)u_0(x)&=&m_1u_1(0)\delta(x-y_1(0))+m_2u_2(0)\delta(x-y_2(0)).
\end{eqnarray*}
If there is no collision, the electrical fields which are associated with the first and
second particles are, respectively, $E_1:=\frac{m_1}{2}$ and
$E_2:=m_1+\frac{m_2}{2}$. Hence the velocity and location of the
first particle are
\begin{equation}\label{uy1t}
u_1(t)=u_1(0)+\kappa\frac{m_1}{2}t, \qquad
y_1(t)=y_1(0)+u_1(0)t+\frac12\kappa\frac{m_1}{2}t^2.
\end{equation}
The velocity and location of the second particle are
\begin{equation}\label{uy2t}
u_2(t)=u_2(0)+\kappa\left(m_1+\frac{m_2}{2}\right)t, \qquad
y_2(t)=y_2(0)+u_2(0)t+\frac12\kappa\left(m_1+\frac{m_2}{2}\right)t^2,
\end{equation}
Therefore the velocity and location of the center of gravity of the
system are
\[
\left. \begin{array}{lll} u(t) & = &
\displaystyle\frac{m_1u_1(t)+m_2u_2(t)}{m_1+m_2} =%\vspace{.05in}
 \displaystyle\frac{m_1u_1(0)+m_2u_2(0)}{m_1+m_2}
+\frac12\kappa\displaystyle\frac{m_1^2+2m_1m_2+m_2^2}{m_1+m_2}t
\vspace{0.05in} \\ & = & u(0)+ \kappa
\displaystyle\frac{m_1+m_2}{2}t, \qquad \qquad u(0):=\displaystyle\frac{m_1u_1(0)+m_2u_2(0)}{m_1+m_2}, \\
\end{array} \right.
\]
and
\[
\left. \begin{array}{lll} y(t) & = &
\displaystyle\frac{m_1y_1(t)+m_2y_2(t)}{m_1+m_2} \vspace{.05in} \\
& = & \displaystyle\frac{m_1y_1(0)+m_2y_2(0)}{m_1+m_2}
+\displaystyle\frac{m_1u_1(0)+m_2u_2(0)}{m_1+m_2}t
+\frac14\kappa\displaystyle\frac{m_1^2+2m_1m_2+m_2^2}{m_1+m_2}t^2
\vspace{0.05in} \\
& = & \displaystyle
y(0)+u(0)t+\frac12\kappa\displaystyle\frac{m_1+m_2}{2}t^2, \qquad
\qquad y(0):=\displaystyle\frac{m_1y_1(0)+m_2y_2(0)}{m_1+m_2}.
\end{array} \right.
\]
If collision takes place at time $\bar{t}$, then a new particle will
be formed at $y(\bar{t})$ with  velocity $u(\bar{t})$: the motion of
this new particle is determined by
\begin{subequations}
\begin{equation}\label{newu}
u_{new}(t)=u(\bar{t})+\kappa\displaystyle\frac{m_1+m_2}{2}(t-\bar{t})=u(t),
\end{equation}
and
\begin{equation}\label{newy}
y_{new}(t)=y(\bar{t})+u(\bar{t})(t-\bar{t})+\frac12\kappa\displaystyle\frac{m_1+m_2}{2}t^2=y(t).
\end{equation}
\end{subequations}
Hence, when and where the collision occurs, it will
\emph{not} change the trajectory of the center of mass of this system.

The standard way to describe the state of this two-particle system at any time $t$, would be to  provide the following information:

\medskip\noindent
(i) if the two particles have not collided
before time $t$: provide the position and velocity of each particle at
time $t$;

\medskip\noindent
(ii) if the two particles collided at some time before $t$:
then they are ``stuck" as one particle thereafter, and we provide the position
and velocity of this new particle at time $t$.

\medskip
Alternatively, we can
provide a complete description of the two-particle system in terms of the
mass $\mass(x,t)$ and momentum $\moment(x,t)$. To this end, take the difference of (\ref{uy2t}) and (\ref{uy1t}), to find
\[
y_2(t)-y_1(t)=\frac12\kappa\left(\frac{m_1}{2}+\frac{m_2}{2}\right)t^2+(u_2(0)-u_1(0))t+(y_2(0)-y_1(0)).\]
Since the two particle system is attractive, $\kappa\leq 0$,
there is a positive $t_c>0$, such that,
\[
\left\{\begin{array}{ll}y_2(t)>y_1(t) &  0\leq t<t_c,\\
y_2(t)=y_1(t), & t=t_c,\\
y_2(t)<y_1(t), & t>t_c.
\end{array}\right.
\]
Then, there are four possibilities to consider, depending on the relative positions of
$q_j(t):=y_j+u_jt+\frac12\kappa\displaystyle E_j t^2, \ j=1,2$:

\medskip\noindent
(i)
if both $q_1(t)$ and $q_2(t)$ are to the left of $x: \ q_1(t), q_2(t) \leq x$, then $\mass(x,t)=m_1+m_2$ and $\moment(x,t)=m_1(u_1+\kappa E_1t)+m_2(u_2+\kappa E_2t)$;

\medskip\noindent
(ii)
if both $q_1(t)$ and $q_2(t)$ are to the right $x: \ q_1(t), q_2(t) > x$, then $\mass(x,t)=0$, and
$\moment(x,t)=0$;

\medskip\noindent
(iii) if $q_1(t) \leq x$ and $q_2(t) > x$, then $\mass(x,t)=m_1$, and
$\moment(x,t)=m_1(u_1+\kappa E_1 t)$;

\medskip\noindent
(iv) finally, if $q_1(t)> x$
and $q_2(t) < x$, this means that collision
occurred earlier, and there is a new particle with mass
$m=m_1+m_2$. The
velocity and location of this new particle at time $t$ are given by  (\ref{newu}), (\ref{newy}),
\[ u(t)=\displaystyle\frac
{m_1(u_1+\kappa E_1 t)+m_2(u_2+\kappa E_2t)} {m_1+m_2}, \qquad y(t)=\displaystyle\frac
{m_1q_1(t)+m_2q_2(t)} {m_1+m_2}.
\]
Therefore, in this case $\mass(x,t)$ and $\moment(x,t)$ are determined by the sign of the expression
\[
\displaystyle\frac{m_1q_1(t)+m_2q_2(t)}{m_1+m_2}-x
=  \displaystyle\frac{m_1(q_1(t)-x)+m_2(q_2(t)-x)}{m_1+m_2},
\]
namely
\[
\left\{\begin{array}{ll}
\left(\begin{array}{l}\mass(x,t)=m_1+m_2 \\ \moment(x,t) = m_1(u_1+\kappa E_1t)+m_2(u_2+\kappa E_2t)\end{array}\right), &
\ {\rm if} \ \ \displaystyle\frac{m_1(q_1(t)-x)+m_2(q_2(t)-x)}{m_1+m_2}\leq 0\\
 \mass(x,t)=\moment(x,t)\equiv 0, & \ {\rm otherwise}. \end{array}\right.
\]

Summarizing the four cases above we observe:

\medskip\noindent
(i) if $q_1(t) \leq x$, then no matter whether collision
happened or not, $m_1$ will contribute to the mass $\mass(x,t)$;

\medskip\noindent
(ii) if $q_1(t) >x$ then the contribution of $m_1$ to the mass
$\mass(x,t)$ depends on whether the second particle is slow enough,
that is, whether $m_1(q_1(t)-x)+m_2(q_2(t)-x)\leq0$.

%%%%%%%%%%%%%%%%%%%%%%%%%%%%%%%%%%%%%%%%%%%%%%%%%%%%%%%%%%%%%%%%%%%%%%%%%%%%%%%%%%%%%%%%%%%%%%%%%%%%%%%%%%
\subsection{The dynamics of general mass distribution} \label{GWeakS}
%%%%%%%%%%%%%%%%%%%%%%%%%%%%%%%%%%%%%%%%%%%%%%%%%%%%%%%%%%%%%%%%%%%%%%%%%%%%%%%%%%%%%%%%%%%%%%%%%%%%%%%%%%

In this subsection, we extend the definition of $\mass$ and
$\moment$ to the general attractive case. Suppose that there exists
$Y_1$ such that
\[
q_s(t):=s+u_0(s)t+\frac12 \kappa E(s,0)t^2\leq x, \quad \forall s\leq Y_1.
\]
Then, independently whether collision occurred or not, the particles emanating
from $y\leq Y_1$ will end up to the left side of $x$ at time $t$, and
therefore, the part of the mass $\displaystyle\int_{-\infty}^{Y_1}
\rho_0(s) ds$ will be on the left side of $x$ at time $t$.
If, on the other hand,
\[
q_s(t)=s+u_0(s)t+\frac12 \kappa E(s,0)t^2>x, \quad \forall s\in(Y_1,Y_2],
\]
then the position of the mass of this part, $\displaystyle\int_{Y_1}^{Y_2}
\rho_0(s) ds$, relative to  $x$, depends on
whether there is enough slow material which will collide with this
part. That is, if there exists $Y_3>Y_2$, such that
\begin{equation} \label{PassX}
\left. \begin{array}{ll}
\displaystyle\frac{\displaystyle\int_{Y_1}^{Y_2}
q_s(t)\rho_0(s) ds+\displaystyle\int_{Y_2}^{Y_3}
q_s(t)\rho_0(s) ds}
{\displaystyle\int_{Y_1}^{Y_2}\rho_0(s)ds+\int_{Y_2}^{Y_3}\rho_0(s)ds}
=
\displaystyle\frac{\displaystyle\int_{Y_1}^{Y_3}
q_s(t)\rho_0(s) ds}
{\displaystyle\int_{Y_1}^{Y_3}\rho_0(s)ds}\leq x, \\ \end{array}
\right.
\end{equation}
then the mass on $(Y_1,Y_2]$ will be end up to be on the left side of $x$. We can rewrite (\ref{PassX}) in the equivalent form
\[
\displaystyle\int_{Y_1}^{Y_3}Q_{x,t}(s) \rho_0(s) ds\leq 0, \qquad Q_{x,t}(s)=q_s(t)-x.
\]
This is similar to the case (iv) of the two particle system, where
$\displaystyle \int_{Y_1}^{Y_2}\rho_0(s)ds$ corresponds to the first particle, and
$\displaystyle \int_{Y_2}^{Y_3}\rho_0(s)ds$ corresponds to the second particle.
Continuing this process, we  find that the exact amount of mass which ends
on the left side of $x$, is given by
\begin{equation}\label{M1}
\mass(x,t):=\int_{-\infty}^{y(x,t)} \rho_0(s) ds ,
\end{equation}
where $y(x,t)$ is determined by \eqref{EPysmu},
\begin{align}
y(x,t) =\sup_{y}\bigg\{y \, \Big| \,
 y = \arginf_y \int_0^{y} Q_{x,t}(s)\rho_0(s) ds\bigg\}, \qquad Q_{x,t}(s)= q_s(t)-x.
\end{align}
The momentum then follows
\begin{equation}
\moment(x,t):=\int_{-\infty}^{y(x,t)} \rho_0(s)\Big( u_0(s)+\kappa
E_0(s)t\Big)ds.
\end{equation}

%%%%%%%%%%%%%%%%%%%%%%%%%%%%%%%%%%%%%%%%%%%%%%%%%%%%%%%%%%%%%%%%%%%%%%%%%%%%%%%%%%%%%%%%%%%%%%%%%%%%%%%%%%
\subsection{Proof of theorem \ref{IntroThm:1Ds} for the attractive case} \label{MainThm}
%%%%%%%%%%%%%%%%%%%%%%%%%%%%%%%%%%%%%%%%%%%%%%%%%%%%%%%%%%%%%%%%%%%%%%%%%%%%%%%%%%%%%%%%%%%%%%%%%%%%%%%%%%

As a preparation for the proof of
theorem \ref{IntroThm:1Ds} with $\kappa\leq 0$, we first
characterize  the entropy solution of \eqref{EP1} in terms of a one-sided
Lipschiz condition. This is the content of the following two lemmas.

\begin{lemma}\label{properQ}
Let $y(x,t)$ be the minimizer in (\ref{EPysmu}). Then $y(\cdot,t)$ is non-decreasing
\[
y(x_1,t)\leq y(x_2,t), \quad \forall x_1<x_2.
\]
In particular, $y(x+,t):=\lim_{z\rightarrow x+}y(z,t)$ is well-defined and
$y(x,t)=y(x+,t), \  \forall x$.
\end{lemma}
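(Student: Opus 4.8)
The plan is to reduce the statement to two structural features of the variational problem \eqref{EPysmu}: that $\partial_x Q_{x,t}(s)\equiv -1$, and the sign condition $\rho_0\ge 0$. I would set $F_{x,t}(y):=\int_0^y Q_{x,t}(s)\rho_0(s)\,ds$ and $\Phi(y):=\int_0^y\rho_0(s)\,ds$, noting that $\Phi$ is non-decreasing and that, under the hypotheses of Theorem \ref{IntroThm:1Ds} ($\rho_0\in L^1$, $u_0\in C^1$, $E_0$ continuous), $F_{x,t}$ is continuous in $y$, indeed jointly continuous in $(x,y)$ since $F_{x,t}(y)=G_t(y)-x\Phi(y)$ with $G_t(y):=\int_0^y\big(s+tu_0(s)+\frac12\kappa E_0(s)t^2\big)\rho_0(s)\,ds$. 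By definition $y(x,t)$ is the largest element of the set $M(x,t)$ of global minimizers of $F_{x,t}(\cdot)$, which is closed by continuity of $F_{x,t}$, so that $y(x,t)=\max M(x,t)$. The single identity I would rely on is, for $x_1<x_2$,
\[
F_{x_2,t}(y)-F_{x_1,t}(y)=-(x_2-x_1)\,\Phi(y),
\]
which follows from $Q_{x_2,t}(s)-Q_{x_1,t}(s)=-(x_2-x_1)$.

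For the monotonicity, fix $t$ and $x_1<x_2$ and write $y_i:=y(x_i,t)=\max M(x_i,t)$. Testing minimality of $y_1$ against the competitor $y_2$, and of $y_2$ against $y_1$, gives $F_{x_1,t}(y_1)\le F_{x_1,t}(y_2)$ and $F_{x_2,t}(y_2)\le F_{x_2,t}(y_1)$; adding these and inserting the identity above, the $F_{x_1,t}$–terms cancel and I am left with $(x_2-x_1)\Phi(y_1)\le (x_2-x_1)\Phi(y_2)$, that is $\Phi(y_1)\le\Phi(y_2)$. Were $y_2<y_1$, monotonicity of $\Phi$ would force $\Phi(y_1)=\Phi(y_2)$, i.e.\ $\rho_0$ carries no mass on $(y_2,y_1)$; but then $F_{x_2,t}$ is constant on $[y_2,y_1]$, so $y_1\in M(x_2,t)$, contradicting $y_1>y_2=\sup M(x_2,t)$. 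Hence $y_1\le y_2$, so $y(\cdot,t)$ is non-decreasing, and in particular $y(x+,t):=\lim_{z\downarrow x}y(z,t)=\inf_{z>x}y(z,t)$ exists with $y(x+,t)\ge y(x,t)$.

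It remains to prove the reverse inequality $y(x+,t)\le y(x,t)$, which I expect to be the only delicate point: a monotone selection could a priori jump, and ruling this out is where the sup-selection in \eqref{EPysmu} enters. I would choose $z_n\downarrow x$ with $y(z_n,t)\to \bar y:=y(x+,t)$; minimality gives $F_{z_n,t}(y(z_n,t))\le F_{z_n,t}(w)$ for every fixed $w$, and passing to the limit using the joint continuity of $F_{x,t}(y)$ in $(x,y)$ yields $F_{x,t}(\bar y)\le F_{x,t}(w)$ for all $w$, i.e.\ $\bar y\in M(x,t)$. Therefore $\bar y\le\sup M(x,t)=y(x,t)$, and combined with $\bar y\ge y(x,t)$ this gives $y(x,t)=y(x+,t)$, completing the proof.

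The main obstacle, as noted, is the right-continuity: one has to exclude jumps of $x\mapsto y(x,t)$, which is done by showing that the right limit is itself a global minimizer of $F_{x,t}(\cdot)$ and hence cannot exceed the maximal one. The monotonicity step, by contrast, is a short convexity-free rearrangement argument driven entirely by $\partial_x Q_{x,t}\equiv -1$ and $\rho_0\ge 0$; the only mild care it requires is the degenerate case in which $\rho_0$ vanishes on the interval between the two candidate minimizers, which is handled above.
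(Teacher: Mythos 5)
Your proof is correct, and the overall strategy — exploiting that $Q_{x,t}(s)$ is affine in $x$ with slope $-1$, together with $\rho_0\ge 0$ — is the same as the paper's, but the two halves are handled with different techniques. For monotonicity, the paper argues by contradiction: assuming $y(x_1,t)>y(x_2,t)$, it transfers the inequality $\int_{y(x_2,t)}^{y(x_1,t)}Q_{x_1,t}\rho_0\le 0$ to $Q_{x_2,t}$ by adding the nonpositive term $(x_1-x_2)\int\rho_0$, and concludes $y(x_1,t)$ is at least as good a minimizer for $Q_{x_2,t}$, contradicting the sup-selection. Your version is the cleaner, symmetrized form of the same calculation (add the two minimality inequalities, cancel, and read off $\Phi(y_1)\le\Phi(y_2)$), and you make the degenerate case ($\rho_0$ vanishing on $(y_2,y_1)$) explicit, which the paper leaves implicit in the appeal to the sup-selection. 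For right-continuity, the two proofs genuinely diverge. The paper again argues by contradiction, noting that $y(x+,t)>y(x,t)$ forces $\int_{y(x,t)}^{y(x+,t)}Q_{x,t}\rho_0>0$ (strictly, again by the sup-selection), and then runs a quantitative $\epsilon$-perturbation to show this strict positivity persists for $Q_{x+\epsilon,t}$, contradicting minimality of $y(x+\epsilon,t)$. You instead take $z_n\downarrow x$ and pass to the limit in the minimality inequalities using joint continuity of $F_{x,t}(y)=G_t(y)-x\Phi(y)$ and closedness of the minimizer set, obtaining directly that $y(x+,t)\in M(x,t)$, hence $y(x+,t)\le\sup M(x,t)=y(x,t)$. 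This is a softer, more conceptual argument that avoids the explicit $\epsilon$-bookkeeping; the paper's version is more self-contained computationally and does not require invoking (joint) continuity of $F$ as a separate fact. Both are valid; yours is arguably the more transparent route to the right-continuity, while the paper's is closer in spirit to how it later analyzes jump points in the proof of the main theorem.
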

\begin{proof}
Assume there exists $x_1<x_2$ such that $y(x_1,t)>y(x_2,t)$. Since
$y(x_1,t)$ minimizes $\displaystyle\int_0^y
Q_{x_1,t}(s)\rho_0(s)ds$, we have $
\displaystyle\int_{y(x_2,t)}^{y(x_1,t)} Q_{x_1,t}(s)\rho_0(s)ds\leq
0$. It follows that
\[ \displaystyle\int_{y(x_2,t)}^{y(x_1,t)} Q_{x_2,t}(s)\rho_0(s)ds=
\int_{y(x_2,t)}^{y(x_1,t)} Q_{x_1,t}(s)\rho_0(s)ds+
\int_{y(x_2,t)}^{y(x_1,t)} (x_1-x_2)\rho_0(s)ds \leq 0.\] That is,
\[ \displaystyle\int_{0}^{y(x_1,t)} Q_{x_2,t}(s)\rho_0(s)ds\leq
\displaystyle\int_{0}^{y(x_2,t)} Q_{x_2,t}(s)\rho_0(s)ds, \quad
y(x_1,t)>y(x_2,t),\] which is a contradiction to the definition of
$y(x_2,t)$.

Next, assume there exists $x$ such that $y(x+,t)>y(x,t)$. Then
$\displaystyle\int_{y(x,t)}^{y(x+,t)} Q_{x,t}(s)\rho_0(s)ds > 0$.
Therefore, for $\epsilon>0$ small enough, we have
\[ \left.\begin{array}{ll}
& \displaystyle\int_{y(x,t)}^{y(x+\epsilon,t)}
Q_{x+\epsilon,t}(s)\rho_0(s)ds \vspace{0.05in}\\
= & \displaystyle\int_{y(x,t)}^{y(x+,t)}
Q_{x+\epsilon,t}(s)\rho_0(s)ds+\displaystyle\int_{y(x+,t)}^{y(x+\epsilon,t)}
Q_{x+\epsilon,t}(s)\rho_0(s)ds \vspace{0.05in} \\
= & \displaystyle\int_{y(x,t)}^{y(x+,t)}
Q_{x,t}(s)\rho_0(s)ds-\int_{y(x,t)}^{y(x+,t)}
\epsilon(s)\rho_0(s)ds+\displaystyle\int_{y(x+,t)}^{y(x+\epsilon,t)}
Q_{x+\epsilon,t}(s)\rho_0(s)ds>0.
\end{array}\right. \]
That is
\[ \displaystyle\int_{0}^{y(x,t)}
Q_{x+\epsilon,t}(s)\rho_0(s)ds<\displaystyle\int_{0}^{y(x+\epsilon,t)}
Q_{x+\epsilon,t}(s)\rho_0(s)ds, \] which is a contradiction to the
definition of $y(x+\epsilon,t)$.
\end{proof}

\begin{lemma}\label{entropy}
[One sided Lipschiz condition] Consider the attractive Euler-Poisson
system \eqref{EP1} with $\kappa\leq 0$.  If $y(x,t_0)>y(x-,t_0)$, then the values in the open interval $\gamma\in \big(y(x-,t_0),y(x,t_0)\big)$, cannot be reached by evolving \eqref{EP1} along particle path, namely, $\gamma\neq \text{a minimizer} \ y(x,t)$.
\end{lemma}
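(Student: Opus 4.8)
The statement asserts, in sticky‑particle language, that a label $\gamma$ lying in the interior of a cluster that has already formed by time $t_0$ can never resurface as an ``edge'' label $y(z,t)$ for $t\ge t_0$: clusters may grow but not split. The plan is to prove this by passing to the \emph{mass variable}, recasting the variational principle \eqref{EPysmu} as a statement about supporting lines of a single primitive function, and then exploiting that this primitive is quadratic in $t$.

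First I would set $a:=y(x-,t_0)$, $b:=y(x,t_0)$ (so $a<b$, $\gamma\in(a,b)$), abbreviate $q_s(t):=s+tu_0(s)+\frac12\kappa E_0(s)t^2$, and let $Y$ be the (generalized) inverse of the nondecreasing function $E_0$. The substitution $\nu=E_0(s)$ turns $\int_0^yQ_{x,t}(s)\rho_0(s)\,ds$ into $\Psi_t(\mu)-x\mu$ up to an additive constant independent of $\mu$, where $\mu:=E_0(y)$ and $\Psi_t$ is determined by $\Psi_t'(\mu)=q_{Y(\mu)}(t)$. So, writing $\mu_a:=E_0(a)$, $\mu_b:=E_0(b)$, $\mu_\gamma:=E_0(\gamma)$, the claim is that $\mu_\gamma$ minimizes no map $\mu\mapsto\Psi_t(\mu)-z\mu$ for $t\ge t_0$, and a point $\mu$ minimizes such a map precisely when $\Psi_t$ lies above the line of slope $z$ through $(\mu,\Psi_t(\mu))$. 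Two facts are then needed. (a) Since $q_s(t)$ is quadratic in $t$ and $E_0(Y(\nu))$ supplies its $t^2$ part, $\Psi_t(\mu)$ is quadratic in $t$ with $t^2$‑coefficient $\frac{\kappa}{2}\int^{\mu}E_0(Y(\nu))\,d\nu$. (b) Because $y(\cdot,t_0)$ is nondecreasing (Lemma \ref{properQ}) and jumps at $x$, \emph{both} $a$ and $b$ minimize $g_{x,t_0}(y):=\int_0^yQ_{x,t_0}(s)\rho_0(s)\,ds$: for $b$ this is the definition, and for $a$ one uses $g_{x',t_0}(y)-g_{x,t_0}(y)=(x-x')\int_0^y\rho_0(s)\,ds$, so that $g_{x',t_0}\to g_{x,t_0}$ uniformly as $x'\uparrow x$ while $y(x',t_0)\to a$, which forces $a$ into the argmin. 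Hence $\mu_a$ and $\mu_b$ both minimize $\mu\mapsto\Psi_{t_0}(\mu)-x\mu$, so $\Psi_{t_0}$ has one supporting line touching at both $\mu_a$ and $\mu_b$; this line is the chord of $\Psi_{t_0}$ over $[\mu_a,\mu_b]$, $\Psi_{t_0}$ sits above it, and with $\lambda:=(\mu_b-\mu_\gamma)/(\mu_b-\mu_a)\in(0,1)$ one obtains $\Psi_{t_0}(\mu_\gamma)\ge\lambda\Psi_{t_0}(\mu_a)+(1-\lambda)\Psi_{t_0}(\mu_b)$.

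Then I would argue by contradiction. Suppose $\gamma=y(z,t_1)$ for some $z$ and some $t_1>t_0$ (the case $t_1=t_0$ being immediate from the monotonicity of $y(\cdot,t_0)$). Then $\mu_\gamma$ minimizes $\mu\mapsto\Psi_{t_1}(\mu)-z\mu$, and evaluating the supporting‑line inequality at $\mu_a$ and $\mu_b$ and taking the $\lambda$‑combination gives $\Psi_{t_1}(\mu_\gamma)\le\lambda\Psi_{t_1}(\mu_a)+(1-\lambda)\Psi_{t_1}(\mu_b)$. On the other hand, by (a), eliminating the $t$‑linear term between $\Psi_{t_0}$ and $\Psi_{t_1}$ yields
\[
\Psi_{t_1}=\frac{t_1}{t_0}\,\Psi_{t_0}+A,\qquad A(\mu)=-\frac{t_1-t_0}{t_0}\int^{\mu}Y(\nu)\,d\nu+\frac{\kappa}{2}\,t_1(t_1-t_0)\int^{\mu}E_0(Y(\nu))\,d\nu .
\]
Since $Y$ and $E_0\circ Y$ are nondecreasing while $\kappa\le0$ and $t_1>t_0>0$, the slope $A'(\mu)=-\frac{t_1-t_0}{t_0}Y(\mu)+\frac{\kappa}{2}t_1(t_1-t_0)E_0(Y(\mu))$ is nonincreasing, so $A$ is concave; moreover $A$ is not affine on $[\mu_a,\mu_b]$ because $\mu_b-\mu_a=\int_a^b\rho_0(s)\,ds>0$ forces $Y$ to increase strictly on a subset of positive measure there. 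A concave function that is not affine on an interval strictly exceeds its chord at every interior point, so $A(\mu_\gamma)>\lambda A(\mu_a)+(1-\lambda)A(\mu_b)$; adding this to $t_1/t_0>0$ times the inequality of the previous paragraph gives $\Psi_{t_1}(\mu_\gamma)>\lambda\Psi_{t_1}(\mu_a)+(1-\lambda)\Psi_{t_1}(\mu_b)$, contradicting the supporting‑line bound. Hence no such $z,t_1$ exist, and in fact $\gamma$ is never even a (not necessarily extremal) minimizer for $t\ge t_0$.

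The step I expect to be the main obstacle is observation (a): it is exactly the change of variable $\mu=E_0(y)$ that converts the awkward term $\frac{\kappa}{2}E_0(y)^2t^2$ in $\int_0^yq_s(t)\rho_0(s)\,ds$ into a genuine $\mu^2$ (convex) term, and this convexity — together with the sign of $\kappa$ and $t_1>t_0>0$ — is precisely what makes $A=\Psi_{t_1}-\frac{t_1}{t_0}\Psi_{t_0}$ concave; in the original variable $y$ the analogous function need not be concave (its second derivative involves $\rho_0'$). The remaining points are routine: the generalized‑inverse bookkeeping when $\rho_0$ vanishes on intervals (handled through the one‑sided limits $y(\cdot,t)\pm$ of Theorem \ref{IntroThm:1Dg}), and the finiteness of $y(x\pm,t_0)$, which is what the hypothesis $y(x-,t_0)<y(x,t_0)$ tacitly presupposes.
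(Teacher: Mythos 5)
Your proof is correct, and it is a genuinely different argument from the paper's. The paper stays in the original spatial variable and packages the two halves $\big(y(x-,t_0),\gamma\big)$ and $\big(\gamma,y(x,t_0)\big)$ into two ``virtual particles'' whose weighted averages produce two quadratics-in-time $f_1(t),f_2(t)$; the sign $\kappa\le 0$ makes both parabolas open downward, $f_1-f_2$ changes sign exactly once at some $t_c>0$, the inequalities in (2.16) force $t_0\ge t_c$, and for $t>t_0$ one shows $\gamma$ loses to either $y(x-,t_0)$ or $y(x,t_0)$ according as $f_1(t)>z$ or $f_1(t)\le z$. You instead pass to the mass coordinate $\mu=E_0(y)$, rewrite the variational problem as finding supporting lines of $\Psi_t(\mu)$, and exploit that $A:=\Psi_{t_1}-\tfrac{t_1}{t_0}\Psi_{t_0}$ is concave and non-affine on $[\mu_a,\mu_b]$ precisely when $\kappa\le0$ and $t_1>t_0>0$; strict concavity above the chord then contradicts $\mu_\gamma$ being a simultaneous minimizer at time $t_1$ while $\mu_a,\mu_b$ are at time $t_0$. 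What your route buys: it isolates the convexity mechanism (the change of variable turns $\tfrac{\kappa}{2}E_0(y)^2t^2$ into a clean $\mu^2$-term, putting the argument firmly in the Lagrangian/mass-coordinate framework of Brenier--Grenier type); it handles $\kappa=0$ without modification (the paper's strict inequalities $a_2<\kappa E_0(\gamma)<a_1<0$ degenerate there); and, usefully, it makes explicit the fact --- which the paper uses implicitly in the left inequality of (2.16) --- that $a=y(x-,t_0)$ is also a minimizer of $g_{x,t_0}$, justified by the uniform convergence $g_{x',t_0}\to g_{x,t_0}$ as $x'\uparrow x$. What the paper's route buys in exchange: it is elementary, avoids any coordinate change and generalized inverses, and reads the competing parabolas $f_1,f_2$ directly as trajectories of the two virtual sub-clusters, which matches the sticky-particle intuition behind the lemma more transparently.
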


Thus, if  $y(x,t_0)>y(x-,t_0)$, then according to (\ref{WSEPsmu}),
$\rho(x,t_0)$ will be a Dirac mass and the lemma \ref{entropy} tells us that
once a Dirac mass is formed, it will never split.

\begin{proof}
By the definition (\ref{EPysmu}), to prove the Lemma, it is enough
to show $\forall\, \gamma\in\Big(y(x-,t_0),y(x,t_0)\Big)$,
$\forall\, z\in \mathbb{R}$, $\forall\, t>t_0$, there exists $w$
such that $\displaystyle\int_0^w Q_{z,t}(s)ds<\int_0^{\gamma}
Q_{z,t}(s)ds$, that is, $\gamma$ does not minimize
$\displaystyle\int_0^y Q_{z,t}(s)\rho_0(s)ds$. For every
$\gamma\in\Big(y(x-,t_0),y(x,t_0)\Big)$, we have
\begin{align}
\int_{\gamma}^{y(x,t_0)} Q_{x,t_0}(s)\rho_0(s)ds\leq 0 \leq
\int_{y(x-,t_0)}^{\gamma} Q_{x,t_0}(s)\rho_0(s)ds,
%\qquad Q_{x,t}(s)= s+tu_0(s)+\frac12 \kappa E_0(s)t^2-x.
\label{ybxl}
\end{align}
 These inequalities  can be rewritten in terms of the quadratics $f_1(t)$ and $f_2(t)$,
\[
f_2(t_0)\leq x  \leq f_1(t_0), \qquad f_j(t):={a_j}t^2+{b_j}t+{c_j},
\]
where the coefficients of $f_j(t)$'s are given by in terms of
$\displaystyle m_1:=\int_{y(x-,t_0)}^{\gamma} \rho_0(s)ds$ and
$\displaystyle m_2:=\int_{\gamma}^{y(x,t_0)} \rho_0(s)ds$:
\[
a_1=\frac{1}{m_1}\int_{y(x-,t_0)}^{\gamma} \frac12\kappa
E_0(s)\rho_0(s)ds, \ b_1=\frac{1}{m_1}\int_{y(x-,t_0)}^{\gamma}
u_0(s)\rho_0(s)ds, \ c_1=\frac{1}{m_1}\int_{y(x-,t_0)}^{\gamma}
s\rho_0(s)ds,
\]
and
\[
a_2=\frac{1}{m_2}\int_{\gamma}^{y(x,t_0)} \frac12\kappa
E_0(s)\rho_0(s)ds, \ b_2=\frac{1}{m_2}\int_{\gamma}^{y(x,t_0)}
u_0(s)\rho_0(s)ds, \ c_2=\frac{1}{m_2}\int_{\gamma}^{y(x,t_0)}
s\rho_0(s)ds.
\]
Notice that $\displaystyle {a_2}<\kappa E_0(\gamma)<{a_1}<0$, and
$\displaystyle {c_2}>\gamma>{c_1}$. Setting $f_1(t)=f_2(t)$, we find
two solutions, one positive and one negative. Denote the positive
one by $t_c$, then
\[
\left\{\begin{array}{ll}
f_1(t)<f_2(t), & 0<t<t_c\\
f_1(t)>f_2(t), & t>t_c.
\end{array}\right.
\]
It follows that $t_0\geq t_c$. For every $t>t_0$ and $z\in
\mathbb{R}$ we have
\begin{align}
\int_{y(x-,t_0)}^{\gamma} Q_{z,t}(s)\rho_0(s)ds=m_1(a_1 t^2+b_1t+c_1-z)=m_1(f_1(t)-z), \nonumber \\
\int_{\gamma}^{y(x,t_0)} Q_{z,t}(s)\rho_0(s)ds=m_2(a_2
t^2+b_2t+c_2-z)=m_2(f_2(t)-z). \nonumber
\end{align}
If $f_1(t)>z$, then
\[
\int_0^{\gamma}Q_{z,t}(s)\rho_0(s)ds>\int_0^{y(x-,t_0)}Q_{z,t}(s)\rho_0(s)ds;
\]
if $f_1(t)\leq z$, then $f_2(t)<f_1(t)\leq z$, which implies
\[
\int_0^{\gamma}Q_{z,t}(s)\rho_0(s)ds>\int_0^{y(x,t_0)}Q_{z,t}(s)\rho_0(s)ds.
\]
Therefore $\gamma$ does not minimize $\displaystyle\int_0^y
Q_{z,t}(s)\rho_0(s)ds$.
\end{proof}

\begin{nota}\label{ychar}
It is straightforward  to verify that  for smooth initial data, $u_0(x)\in C(\mathbb{R})$,
we have  $Q_{x,t}(y(x,t))=0$, i.e.,
\[
y(x,t)+tu_0(y(x,t))+\frac{1}{2}\kappa E_0(y(x,t))t^2=x, \quad \forall x.
\]
\end{nota}

\begin{nota}\label{yxcont}
Consider two adjacent discontinuous points $a$ and $b$ of $y(x,t)$,
i.e., $y(a-,t)<y(a,t)$, $y(b-,t)<y(b,t)$ and $y(x-,t)=y(x,t)$ for
every $x\in(a,b)$. Then $R$ and $M$ are continuous on $(a,b)$ at
time $t$. Moreover, combining lemma \ref{properQ} and remark
\ref{ychar}, we find that the characteristics emanate from the interval
$\Big(y(a,t),y(b,t)\Big)$ at $t=0$,
\[
x(t)=x_0+u_0(x_0)t+\frac12\kappa E_0(x_0)t^2, %\quad
%x_0=y(x(t),t),
\quad \forall x_0\in(y(a,t),y(b,t)),
\]
will not intersect before $t$.
\end{nota}

As a final preparation for the proof
of theorem \ref{IntroThm:1Ds},  we calculate the distributional derivative of jump discontinuities across curves over surfaces, which will be useful when dealing the
singular part of $\mass_t$ and $\moment_x$. We summarize this calculation in the following lemma.

\begin{lemma}\label{singularMeasure}
Consider an open region, $V\subset \mathbb{R}^2$, and a curve,
$C(t,x(t))$ in the $t-x$ plane which divides $V$ into two parts, $V^l$ and $V^r$, and assume that
a function $S(t,x)$ is smooth on either side of this curve $C$, with values $S^{l}$ on $V^l$ and $S^r$ on $V^r$.
The weak derivative of $S$ is given by,
\[
S_x=S^{l}_x+S^{r}_x+ (S^{r}-S^{l})\nu_2\eta_C, \qquad
 S_t=S^{l}_t+S^{r}_t+ (S^{r}-S^{l})\nu_1\eta_C.
\]
Here,
$\nu$ is the  outward unit normal vector of $V^l$ on
boundary $C$,
\[
\nu=(\nu_1,\nu_2):=\left(\frac{\dot{x}(t)}{\sqrt{\dot{x}^2(t)+1}},\frac{-1}{\sqrt{(\dot{x}^2(t)+1}}\right).
\]
and  $\eta_{C}(t,x)$ is a surface measure supported on the
curve $C$, satisfying
\[
 \int_{\mathbf{R}^{2}} \phi(t,x)\eta_{C}(t,x) dt dx
=\int_{a}^{b} \phi(t,x(t))\displaystyle
\sqrt{\dot{x}^2(t)+1} dt, \quad \forall \phi \in
C_0^{\infty}(\mathbb{R}^{2}).
\]
\end{lemma}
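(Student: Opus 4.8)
The plan is to prove this by a direct distributional computation, localizing to a tubular neighborhood of the curve $C$ where the two one-sided pieces can be handled separately. First I would reduce to the statement for $S_x$ alone; the identity for $S_t$ follows by the identical argument with the roles of the two coordinates interchanged, which is why the normal component $\nu_1$ replaces $\nu_2$. So fix a test function $\phi\in C_0^\infty(\mathbb{R}^2)$, supported inside $V$, and unwind the definition of the weak derivative:
\[
\langle S_x,\phi\rangle = -\int_{V} S\,\phi_x\, dt\,dx = -\int_{V^l} S^l\phi_x\, dt\,dx - \int_{V^r} S^r\phi_x\, dt\,dx.
\]
On each of $V^l$ and $V^r$ the function $S^l$ (resp.\ $S^r$) is smooth, so I would integrate by parts on each piece using the divergence theorem in the plane. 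The boundary of $V^l$ (inside $\supp\phi$) consists only of the curve $C$, with outward unit normal $\nu=(\nu_1,\nu_2)$ as defined in the statement; the boundary of $V^r$ is the same curve with outward normal $-\nu$.

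The second step is to assemble the boundary terms. Integration by parts on $V^l$ gives $\int_{V^l} S^l_x\phi\, dt\,dx + \int_C S^l\phi\,\nu_2\, d\sigma$, and on $V^r$ it gives $\int_{V^r} S^r_x\phi\, dt\,dx - \int_C S^r\phi\,\nu_2\, d\sigma$, where $d\sigma$ is arclength along $C$. Adding, the interior terms combine to $\int_V (S^l_x+S^r_x)\phi$ — here I am extending $S^l_x$ and $S^r_x$ by zero outside their respective half-regions, which is exactly the convention implicit in the statement — and the boundary terms combine to $\int_C (S^l - S^r)\phi\,\nu_2\, d\sigma = -\int_C (S^r-S^l)\phi\,\nu_2\,d\sigma$. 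Wait — the sign: carrying the minus sign from $\langle S_x,\phi\rangle=-\int S\phi_x$ through, one gets $\langle S_x,\phi\rangle = \int_V(S^l_x+S^r_x)\phi + \int_C(S^r-S^l)\nu_2\phi\,d\sigma$, which is the claimed formula once we identify the arclength integral $\int_C(\cdot)\,d\sigma$ with the pairing against the surface measure $\eta_C$. That identification is precisely the displayed defining property of $\eta_C$: parametrizing $C$ by $t\mapsto(t,x(t))$ for $t\in[a,b]$ gives $d\sigma=\sqrt{\dot x^2(t)+1}\,dt$, so $\int_C\psi\,d\sigma=\int_a^b\psi(t,x(t))\sqrt{\dot x^2(t)+1}\,dt=\langle\psi,\eta_C\rangle$ for any continuous $\psi$.

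The main technical obstacle is justifying the integration by parts on each half-region when $S^l,S^r$ are only assumed smooth \emph{up to} the curve $C$ and $C$ itself is merely a $C^1$ (or Lipschitz) graph $x=x(t)$: one must check that the divergence theorem applies, which is standard for a domain whose boundary is a $C^1$ graph but should be noted, possibly by first proving the identity for smooth $C$ and a smooth approximation of $S$, then passing to the limit, or simply by invoking the divergence theorem for Lipschitz domains. A minor bookkeeping point is the orientation convention — that $\nu$ as written is genuinely the \emph{outward} normal of $V^l$, i.e.\ that $V^l$ is the side on which $x<x(t)$, so that $\nu_2=-1/\sqrt{\dot x^2+1}<0$; if the labelling of $V^l$ and $V^r$ is the opposite, the signs of $\nu$ and of $(S^r-S^l)$ flip together and the formula is unchanged, so this is harmless but worth a one-line remark. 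Everything else is the routine divergence-theorem computation sketched above, and no result beyond the elementary calculus of distributions is needed.
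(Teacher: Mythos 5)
Your proposal is correct and follows essentially the same route as the paper: split the pairing $-\int_V S\phi_x$ over $V^l$ and $V^r$, integrate by parts (divergence theorem) on each piece, and collect the two boundary terms along $C$ into the surface-measure contribution. The intermediate sign wobble you flag and fix yourself, as well as the remark about which side is ``$V^l$,'' are harmless; the paper carries the leading minus sign through from the start, which avoids the need for the correction, but the substance of the argument is identical.
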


\begin{proof}
For every $\phi(x,t)\in C_0^{\infty}(V)$,
\[ \left. \begin{array}{ll}
& \displaystyle\int_{V} \phi_x(x,t) S(x,t) dtdx  =
\displaystyle\int_{V^l} \phi_x(x,t) S^l(x,t) dtdx + \int_{V^r}
\phi_x(x,t) S^r(x,t) dtdx \vspace{0.05in} \\
= & -\displaystyle\int_{V^l} \phi(x,t) S_x^l(x,t) dtdx+
\int_{C}\phi(t,x(t))S^l(t,x(t))\nu_2 dC -\displaystyle \int_{V^r}
\phi(x,t) S_x^r(x,t) dtdx \vspace{0.05in} \\ & -\displaystyle
\int_{C}\phi(t,x(t))S^r(t,x(t))\nu_2 dC  \qquad \big(\mbox{where} \ (dC=\sqrt{(dx/dt)^2+1})\big) \\
= & -\displaystyle\int_{V^l} \phi(x,t) S_x^l(x,t) dtdx-\displaystyle
\int_{V^r} \phi(x,t) S_x^r(x,t) dtdx -
\int_{C}\phi(t,x(t))\Big(S^r(t,x(t))-S^l(t,x(t))\Big)\nu_2 dC,
 \vspace{0.05in} \\
= & -\displaystyle\int_{V^l} \phi(x,t) S_x^l(x,t) dtdx-\displaystyle
\int_{V^r} \phi(x,t) S_x^r(x,t) dtdx -
\int_{V}\phi(t,x(t))\Big(S^r(t,x(t))-S^l(t,x(t))\Big)\nu_2 \eta_C
dtdx.
\end{array} \right.
\]
Hence $S_x=S^{l}_x+S^{r}_x+ (S^{r}-S^{l})\nu_2\eta_C$. Similarly,
\[ \left. \begin{array}{ll}
& \displaystyle\int_{V} \phi_t(x,t) S(x,t) dtdx  =
\displaystyle\int_{V^l} \phi_t(x,t) S^l(x,t) dtdx + \int_{V^r}
\phi_t(x,t) S^r(x,t) dtdx \vspace{0.05in} \\
= & -\displaystyle\int_{V^l} \phi(x,t) S_t^l(x,t) dtdx+
\int_{C}\phi(t,x(t))S^l(t,x(t))\nu_1 dC -\displaystyle \int_{V^r}
\phi(x,t) S_t^r(x,t) dtdx \vspace{0.05in} \\ & -\displaystyle
\int_{C}\phi(t,x(t))S^r(t,x(t))\nu_1 dC  \vspace{0.05in} \\
= & -\displaystyle\int_{V^l} \phi(x,t) S_t^l(x,t) dtdx-\displaystyle
\int_{V^r} \phi(x,t) S_t^r(x,t) dtdx -
\int_{C}\phi(t,x(t))\Big(S^r(t,x(t))-S^l(t,x(t))\Big)\nu_1 dC,
 \vspace{0.05in} \\
= & -\displaystyle\int_{V^l} \phi(x,t) S_t^l(x,t) dtdx-\displaystyle
\int_{V^r} \phi(x,t) S_t^r(x,t) dtdx -
\int_{V}\phi(t,x(t))\Big(S^r(t,x(t))-S^l(t,x(t))\Big)\nu_1 \eta_C
dtdx.
\end{array} \right.
\]
Hence $S_t=S^{l}_t+S^{r}_t+ (S^{r}-S^{l})\nu_1\eta_C$.
\end{proof}

Equipped with lemmas \ref{entropy} and \ref{singularMeasure}, we are now ready to complete  the proof of theorem \ref{IntroThm:1Ds} in the attractive case,  $\kappa\leq 0$.

\begin{proof} [Proof of theorem \ref{IntroThm:1Ds} with $\kappa <0$]

\noindent {\bf Step \#1} [the mass equation].
First, we show $\rho$ and $\rho u$ satisfy the mass
equation (\ref{EP-mass}) in the weak sense. We need to verify that
\begin{equation}\label{EPMPsmu}
\int_{0}^{\infty}\int_{-\infty}^{\infty} \rho_t \phi+(\rho u)_x\phi
dxdt =\int_{0}^{\infty}\int_{-\infty}^{\infty} \mass_{xt}
\phi+\moment_{xx}\phi dxdt =\int_{0}^{\infty}\int_{-\infty}^{\infty}
\mass\phi_{xt} +\moment\phi_{xx} dxdt
\end{equation}
vanishes for all test functions $\phi\in
C_c^{\infty}\Big([0,\infty)\times(-\infty,\infty)\Big)$. To this
end, we claim that
\begin{equation}\label{MPG3smu}
\displaystyle \frac{\partial \mass}{\partial t}=-\frac{\partial
\moment}{\partial x}.
\end{equation}
It follows (\ref{MPG3smu}) that there
exists a function $G$ such that
\begin{equation}\label{MPG4smu}
\mass(x,t)=-\displaystyle\frac{\partial G(x,t)}{\partial x}, \quad
\moment(x,t)=\displaystyle\frac{\partial G(x,t)}{\partial t}.
\end{equation}
By plugging (\ref{MPG4smu}) into (\ref{EPMPsmu}) we obtain
\begin{equation}\label{eq:wmasssmu}
\int_{0}^{\infty}\int_{-\infty}^{\infty} \rho_t \phi+(\rho u)_x\phi
dxdt=\int_{0}^{\infty}\int_{-\infty}^{\infty} G \phi_{xxt}-
G\phi_{xxt} dxdt =0, \quad \forall \phi\in
C_c^{\infty}\Big(\mathbb{R}\times \mathbb{R}_+\Big).
\end{equation}

We decompose $\mass_t$ and $\moment_x$ into
the sum of an absolutely continuous measure and a singular measure. We denote
\[
\mass_t=:\mass_t^a+\mass_t^s, \qquad
\moment_x=:\moment_x^a+\moment_x^s,
\]
where $\mass_t^a$ and $\moment_x^a$ are absolutely continuous
measures, $\mass_t^s$ and $\moment_x^s$ are singular measures.
More precisely, if $(x,t)$ is a jumping
point \big(that is, $y(x-,t)<y(x,t)$\big), then $\mass_t$ and $\moment_x$ are
singular at $(x,t)$; if $y(x-,t)=y(x,t)$, then $\mass$ and $\moment$ are continuous,
hence $\mass_t$ and $\moment_x$ are not singular.
The detailed proof of (\ref{MPG3smu}) is carried out  by verifying that $\mass_t^a=-\moment_x^a$
and $\mass_t^s=-\moment_x^s$ in steps \#1(a) and \#1(b) below.

\noindent {\bf Step \#1(a)} [the regular part of the mass equation]. We show that $\mass_t^a=-\moment_x^a$. If $(x,t)$ is a jumping point, then
$\mass_t^a=-\moment_x^a=0$. Otherwise,  $y(x,t)$ is continuous at $(x,t)$, and by
the definition of $\mass$ and $\moment$, we have
\begin{equation}\label{EPMtsmu}
\mass_t^a=\rho_0(y)\displaystyle\frac{\partial y}{\partial t}, \ \text{and} \
\moment_x^a=\rho_0(y)\Big(u_0(y)+\kappa
E_0(y)t\Big)\displaystyle\frac{\partial y}{\partial x}.
\end{equation}
By remark \ref{yxcont}, the equality $Q_{x,t}(y(x,t))=0$ is valid in the neighborhood 
of $(x,t): \ \displaystyle Q_{x,t}(y(x,t))=tu_0(y)+y+\frac12 \kappa E_0(y)t^2-x \equiv 0$.
Taking partial derivatives  with respect to $t$ and
$x$ yields
\[
u_0(y)+\kappa E_0(y)t+\frac{\partial Q_{x,t}(y)}{\partial
y}\frac{\partial y}{\partial t}=0 \ \ \text{and} \ \ 
-1+\frac{\partial Q_{x,t}(y)}{\partial y}\frac{\partial y}{\partial
x}=0.
\]
Hence
\[
\displaystyle\frac{\partial y}{\partial t}=-\frac{u_0(y)+
\kappa E_0(y)t}{\partial_y Q_{x,t}(y)},
\qquad \displaystyle \frac{\partial y}{\partial x}=\frac{1}{\partial_y Q_{x,t}(y)},
\]
and therefore
\begin{equation}\label{ytyxasmu}
\displaystyle\frac{\partial y}{\partial t}=-\Big(u_0(y)+\kappa
E_0(y)t\Big)\frac{\partial y}{\partial x}.
\end{equation}
Combining (\ref{ytyxasmu}) and (\ref{EPMtsmu}) we
obtain $\mass_t^a(x,t)=-\moment_x^a(x,t)$. \vspace{0.1in}

\noindent {\bf Step \#1(b)} [the singular part of the mass equation]. We show that
$\mass_t^s=-\moment_x^s$. If $y(z-,t)=y(z,t)$, then $\mass_t^s=-\moment_x^s=0$ at $(z,t)$.
 Otherwise,  $(z,t)$ is a jumping point where we have
\begin{equation}\label{jumpYsmu}
\displaystyle\int_{y(z-,t)}^{y(z,t)} \Big(s+tu_0(s)+\frac12 \kappa
E_0(s)t^2-z\Big)\rho_0(s)ds=0,
\end{equation}
\begin{align}\label{jumpYc}
y(z,t)+tu_0(y(z,t))+\frac12\kappa E_0(y(z,t))t^2-z=0,\\
\label{jumpYmc} y(z-,t)+tu_0(y(z-,t))+\frac12\kappa E_0(y(z-,t))t^2-z=0.\\
\nonumber
\end{align}
Denote the trajectory of this jumping point by $C(t,z(t))$. According
to Lemma \ref{singularMeasure}, we have
\begin{equation}\label{jumpMEPsmu}
\mass_t^s(z,t) =B(z,t)\eta_C
\frac{\dot{z}(t)}{\sqrt{1+\dot{z}^2(t)}}
\end{equation} and
\begin{equation}\label{jumpPEPsmu}
\moment_x^s(z,t)=-B(z,t)u(z,t)\eta_C\frac{1}
{\sqrt{1+\dot{z}^2(t)}};
\end{equation}
where %$B(z,t)$ is the Dirac mass at location $z$ at time $t$, that is
\[
B(z,t)=\displaystyle\int_{y(z-,t)}^{y(z,t)} \rho_0(s)ds, \qquad u(z,t)=\displaystyle\frac{\displaystyle\int_{y(z-,t)}^{y(z,t)}\rho_0(s)\Big(u_0(s)+
\kappa E_0(s)t\Big)ds}{B(z,t)}.
\]
Thus, to prove $\mass_t^s=-\moment_x^s$ at the jumping point $(z,t)$, it remains to  show that the propagation speed of this jumping point, $\dot{z}(t)=:\bar{u}(z,t)$ is actually given by $u(z,t)$. We provide the details of $\bar{u}(z,t)=u(z,t)$ below.

The location of this jumping point at time $t+\Delta t$ is $(t+\Delta t, z(t+\Delta t)$. Hence
\begin{equation}\label{EPjdt1smu}
\displaystyle\int_{y(z(t+\Delta t)-,t+\Delta t)}^{y(z(t+\Delta
t),t+\Delta t)} \Big(s+(t+\Delta t)u_0(s)+\frac12 \kappa
E_0(s)(t+\Delta t)^2-z(t+\Delta t)\Big)\rho_0(s)ds=0.
\end{equation}
Combining,
\begin{align} \nonumber
y(z(t+\Delta t),t+\Delta
t)=y(z,t)+O(\Delta t), \\
\label{EPTEyx1smu} y(z(t+\Delta t)-,t+\Delta
t)=y(z-,t)+O(\Delta t), \\
\nonumber z(t+\Delta
t)=z+\bar{u}(z,t)\Delta t+O(\Delta t^2). \nonumber
\end{align}
with (\ref{EPjdt1smu}) and (\ref{jumpYsmu}), we obtain
\begin{eqnarray*}
0 & = & \displaystyle\int_{y(z(t+\Delta t)-,t+\Delta
t)}^{y(z(t+\Delta t),t+\Delta t)} \Big(s+(t+\Delta t)u_0(s)+\frac12
\kappa E_0(s)(t+\Delta t)^2-z(t+\Delta t)\Big)\rho_0(s)ds \\
& = & \displaystyle\int_{y(z-,t)}^{y(z,t)} \Big(s+(t+\Delta
t)u_0(s)+\frac12 \kappa E_0(s)(t+\Delta t)^2-z-\bar{u}(z,t)\Delta
t+O(\Delta t ^2))\Big)\rho_0(s)ds \\
& & + \displaystyle\int_{y(z(t+\Delta t)-,t+\Delta t)}^{y(z-,t)}
\Big(s+(t+\Delta t)u_0(s)+\frac12 \kappa E_0(s)(t+\Delta
t)^2-z-\bar{u}(z,t)\Delta
t)\Big)\rho_0(s)ds  \\
& & + \displaystyle\int_{y(z,t)}^{y(z(t+\Delta t),t+\Delta t)}
\Big(s+(t+\Delta t)u_0(s)+\frac12 \kappa E_0(s)(t+\Delta
t)^2-z-\bar{u}(z,t)\Delta
t)\Big)\rho_0(s)ds.  \\
\end{eqnarray*}
For every $s\in \Big(y(z(t+\Delta t)-,t+\Delta t), y(z-,t)\Big)$,
using Taylor expansion and (\ref{jumpYc}), we obtain
\[ s+tu_0(s)+\frac12 \kappa E_0(s)t^2-z=y(z-,t)+tu_0(y(z-,t))+\frac12 \kappa E_0(y(z-,t))t^2-z+O(\Delta t)=O(\Delta t).\]
Hence
\[ \displaystyle\int_{y(z(t+\Delta t)-,t+\Delta t)}^{y(z-,t)}
\Big(s+(t+\Delta t)u_0(s)+\frac12 \kappa E_0(s)(t+\Delta
t)^2-z-\bar{u}(z,t)\Delta t)\Big)\rho_0(s)ds=o(\Delta t). \]
Similarly
\[ \displaystyle\int_{y(z,t)}^{y(z(t+\Delta t),t+\Delta t)}
\Big(s+(t+\Delta t)u_0(s)+\frac12 \kappa E_0(s)(t+\Delta
t)^2-z-\bar{u}(z,t)\Delta t)\Big)\rho_0(s)ds=o(\Delta t). \]
Therefore
\begin{eqnarray*}
0 & = & \displaystyle\int_{y(z(t+\Delta t)-,t+\Delta
t)}^{y(z(t+\Delta t),t+\Delta t)} \Big(s+(t+\Delta t)u_0(s)+\frac12
\kappa E_0(s)(t+\Delta t)^2-z(t+\Delta t)\Big)\rho_0(s)ds \\
& = & \displaystyle\int_{y(z-,t)}^{y(z,t)} \Big(s+(t+\Delta
t)u_0(s)+\frac12 \kappa E_0(s)(t+\Delta t)^2-z-\bar{u}(z,t)\Delta
t)\Big)\rho_0(s)ds + o(\Delta t)\\
& = & \displaystyle\int_{y(z-,t)}^{y(z,t)} \Big(s+t u_0(s)+\frac12
\kappa E_0(s)t^2-z)\Big)\rho_0(s)ds  \qquad \qquad \textrm{( --- which is 0 by (\ref{jumpYsmu}))}\\
& & + \displaystyle\int_{y(z-,t)}^{y(z,t)} \Big(u_0(s)\Delta
t+\frac12 \kappa E_0(s)(2t\Delta t+\Delta t^2)-\bar{u}(z,t)\Delta
t)\Big)\rho_0(s)ds + o(\Delta t) \\
& = & \Delta t\displaystyle\int_{y(z-,t)}^{y(z,t)}
\Big(u_0(s)+\kappa E_0(s)t-\bar{u}(z,t)\Big)\rho_0(s)ds +
o(\Delta t). \\
\end{eqnarray*}
This concludes our argument that $\bar{u}(z,t)=u(z,t)$,
\[
\bar{u}(z,t)=\displaystyle\frac{\displaystyle\int_{y(z-,t)}^{y(z,t)}\rho_0(s)\Big(u_0(s)+
\kappa E_0(s)t\Big)ds}
{\displaystyle\int_{y(z-,t)}^{y(z,t)}\rho_0(s)ds}=u(z,t),
\]
and $\mass_t^s=-\moment_x^s$ follows. Thus, (\ref{MPG3smu}) holds, and the
mass equation (\ref{EP-mass}) is
satisfied in the weak sense (\ref{eq:wmasssmu}).
\newline
\newline
\noindent {\bf Step \#2} [the momentum equation]. Next, we verify the momentum equation (\ref{EP-moment}) in a similar
way. We apply test functions $\phi\in
C_c^{\infty}\Big([0,\infty)\times(-\infty,\infty)\Big)$ to
(\ref{EP-moment}), then
\begin{eqnarray}\label{EPV11bsmu}
\lefteqn{\displaystyle\int_{0}^{\infty}\int_{-\infty}^{\infty} (\rho
u)_t \phi+(\rho u^2)_x\phi-\kappa\rho E \phi dxdt} \\ \nonumber & &=
\displaystyle\int_{0}^{\infty}\int_{-\infty}^{\infty} \moment_{xt}
\phi+W_{xx}\phi-Z_{xx}\phi dxdt =
\displaystyle\int_{0}^{\infty}\int_{-\infty}^{\infty}
\moment\phi_{xt} +(W-Z)\phi_{xx} dxdt,
\end{eqnarray}
where
\[
W(x,t):=\int_0^x \displaystyle\frac{\moment^2_s(s,t)}{ \mass_s(s,t)} ds,
\quad Z_{xx}(x,t)=\kappa\rho E.%, \quad  F(x,t):=Z_x(x,t).
\]

\noindent
We will show below, in steps \#2(a)-2(b), that
\begin{equation}\label{MtWZx38}
\frac{\partial \moment}{\partial t}=-\frac{\partial
(W-Z)}{\partial x}.
\end{equation}
This yields the existence of $\Psi$ such that
\begin{equation}\label{EPC11bsmu}
\moment=\displaystyle\frac{\partial \Psi}{\partial x}  \qquad
W-Z=-\displaystyle\frac{\partial \Psi}{\partial t},
\end{equation}
which in turn, implies that the momentum equation, (\ref{EP-moment}), holds in its weak formulation (\ref{EPV11bsmu}).

\noindent {\bf Step \#2(a)}.
The main claim here is that $Z_x$ is given by $\int_0^{y(x,t)} \kappa\rho_0(s)E_0(s)ds$, that is, if we let $F$ denote $Z_x$ then
\begin{equation}\label{FormulaFsmu}
F_{x}(x,t)=\kappa\rho(x,t)E(x,t), \qquad  F(x,t):=\int_0^{y(x,t)} \kappa\rho_0(s)E_0(s)ds.
\end{equation}
As before, we distinguish between two cases. In the case $x$ is a continuity point,
$y(x-,t)=y(x,t)$, then
\begin{subequations}
\begin{equation}\label{FxCsmu}
\displaystyle\frac{\partial F}{\partial
x}=\kappa\rho_0(y)E_0(y)\frac{\partial y}{\partial x}.
%=k\frac{\partial M(x,t)}{\partial x}.
\end{equation}
Combining $\rho(x,t)=\disp\frac{\partial}{\partial x} \mass(x,t)$ and
(\ref{WSEPsmu}), we obtain
\begin{equation}\label{FxrhoCsmu}
\rho_0(y)\frac{\partial y}{\partial x}=\frac{\partial }{\partial
x}\int_0^{y(x,t)} \rho_0(s)ds=\frac{\partial \mass(x,t)}{\partial
x}=\rho(x,t),
\end{equation}
and
\begin{equation}\label{FxmCsmu}
\left.
\begin{array}{lll} E(x,t) & = & \displaystyle \int_{-\infty}^{x}\rho(w,t)dw=\int_{-\infty}^{x}
\displaystyle\frac{\partial}{\partial w}\Big( \int_{0}^{y(w,t)}\rho_0(s)ds\Big) dw \vspace{0.05in} \\
& = & \displaystyle\int_{0}^{y(x,t)}\rho_0(s)ds -
\int_{0}^{-\infty}\rho_0(s)ds=\int_{-\infty}^{y(x,t)}\rho_0(s)ds=E_0(y).\\
\end{array} \right.
\end{equation}
Plugging (\ref{FxrhoCsmu}) and (\ref{FxmCsmu}) into (\ref{FxCsmu}),
we obtain (\ref{FormulaFsmu}), $\displaystyle F_x=\kappa\rho(x,t)E(x,t)$.
\end{subequations}

Next, we consider the case of a jump discontinuity at $x=z$, where $y(z,t)>y(z-,t)$. Then
\[
\left. \begin{array}{lll}  \displaystyle\frac{\partial F}{\partial
x}\bigg|_{x=z} & = & \displaystyle\frac{\partial}{\partial x}
\int_{-\infty}^{y(x,t)}\kappa\rho_0(s)E_0(s)ds\bigg|_{x=z}
 =  \displaystyle \delta(x-z)
\int_{y(z-,t)}^{y(z,t)}\kappa\rho_0(s)E_0(s)ds \vspace{0.05in}
\\ & = & \displaystyle
\delta(x-z) \int_{y(z-,t)}^{y(z,t)}\kappa E_0(s)dE_0(s)
 =  \displaystyle
\delta(x-z) \frac{\kappa}{2}\bigg( E_0(y(z,t))^2-E_0(y(z-,t))^2\bigg). \\
\end{array} \right.
\]
On the other hand
\[
\left. \begin{array}{lll} k\rho(z,t)E(z,t) & = &
\kappa\displaystyle\frac{\int_0^{y(x,t)}\rho_0(s)ds}{\partial
x}\bigg|_{x=z} \frac12\bigg(\displaystyle\int_{-\infty}^{z-}
\rho(s,t)ds+\int_{-\infty}^{z+}
\rho(s,t)ds\bigg) \vspace{0.05in} \\
& = & \displaystyle\frac{\kappa}{2} \displaystyle \delta(x-z)
\int_{y(z-,t)}^{y(z,t)}\rho_0(s)ds
\bigg(\displaystyle\int_{-\infty}^{z-}
\frac{\partial R(s,t)}{\partial s}ds+\int_{\infty}^{z}\frac{\partial R(s,t)}{\partial s}ds\bigg)\vspace{0.05in} \\
& = & \displaystyle\frac{\kappa}{2} \displaystyle \delta(x-z)
\int_{y(z-,t)}^{y(z,t)}\rho_0(s)ds
\bigg(\displaystyle\int_{-\infty}^{y(z-,t)}
\rho_0(s)ds+\int_{-\infty}^{y(z,t)}\rho_0(s)ds\bigg)\vspace{0.05in} \\
& = & \displaystyle
 \frac{\kappa}{2}\delta(x-z) \bigg(
E_0(y(z,t))-E_0(y(z-,t))\bigg)\bigg(E_0(y(z-,t))+ E_0(y(z,t))
\bigg) \vspace{0.05in} \\
& = & \displaystyle %\frac{\partial y(x+,t)}{\partial x}
\delta(x-z) \frac{\kappa}{2}\bigg(
E_0(y(z,t))^2-E_0(y(z-,t))^2\bigg).
\end{array} \right.
\]
Thus, recover (\ref{FormulaFsmu}) $\displaystyle F_x=\kappa\rho(x,t)E(x,t)$ also at any jumping point $z$,
\begin{equation}\label{ZxF33}
Z_x=F=\int_0^{y(x,t)} \kappa\rho_0(s)E_0(s)ds.
\end{equation}

\noindent {\bf Step \#2(b)}. To prove (\ref{MtWZx38}), we decompose $\moment_t$ and $(W-Z)_x$ into the sum of
an absolutely continuous measure  and a singular measure, denoting
\[
\moment_t=\moment_t^a+\moment_t^s, \qquad (W-Z)_x=(W-Z)_x^a+(W-Z)_x^s,
\]
where $\{\}^a$ denotes an absolutely continuous measure and $\{\}^s$
denotes a singular measure. The absolutely continuous measure of $(W-Z)_x$ is given by
$(\widetilde{W}_x-F)$, where
\begin{equation}\label{tildeW}
\widetilde{W}:=\int_0^{y(x,t)}
\rho_0(s)\Big(u_0(s)+E_0(s)t\Big)^2 ds.
\end{equation}
%If $y(x-,t)=y(x,t)$, then $W_x=\widetilde{W}_x$ at $(x,t)$.
 Since $y(x,t)$ is
continuous almost everywhere, we have $W_x=\widetilde{W}_x$ almost
everywhere.

Consider the absolutely continuous parts: if $y(x-,t)<y(x,t)$, then
$\moment_t^a(x,t)=-(W-Z)_x^a(x,t)=0$; otherwise, if $y(x-,t)=y(x,t)$, then
\begin{subequations}
\begin{equation}\label{abPtsmu} \left.
\begin{array}{lll} \moment_t^a(x,t) &
= & \rho_0(y)\Big(u_0(y)+\kappa
E_0(y)t\Big)\displaystyle\frac{\partial
y}{\partial t}+\displaystyle\int_0^{y(x,t)}\kappa \rho_0(s)E_0(s)ds \vspace{0.1in} \\
& = &\rho_0(y)\Big(u_0(y)+\kappa
E_0(y)t\Big)\displaystyle\frac{\partial
y}{\partial t}+F, \\
\end{array} \right.
\end{equation} and
\begin{equation}\label{abWQxsmu} (W-Z)_x^a(x,t)=\widetilde{W}_x^a-F=\rho_0(y)\Big(u_0(y)+\kappa
E_0(y)t\Big)^2\displaystyle\frac{\partial y}{\partial x}-F.
\end{equation}
\end{subequations}
Combining (\ref{abPtsmu}), (\ref{abWQxsmu}) and (\ref{ytyxasmu}), we
obtain $\moment_t^a=-(W-Z)_x^a$.\newline
For the singular parts: if $y(x-,t)=y(x,t)$, then $\moment_t^s(x,t)=-(W-Z)_x^s(x,t)=0$; otherwise, at any jumping point $(z,t)$ we have,
(by the definition of $\mass$, $\moment$ and $Z$),
\[
\moment_t^s(z,t)=B(z,t)
u(z,t)\eta_C\frac{\dot{z}(t)}{\sqrt{1+\dot{z}(t)^2}},
\]
\[
(W-Z)_x^s(z,t)=W_x^s(z,t)= -B(z,t)
u^2(x,t)\eta_C\frac{1}{\sqrt{1+\dot{z}(t)^2}}.
\]
When deriving $(W-Z)_x^s$, we used the fact that (i) $Z_x^s \equiv 0$ (since $Z_x=F$ is bounded everywhere), and (ii)  we have already shown, $\dot{z}=u(z,t)$.
Hence, we also have for the singular part $\moment_t^s=-(W-Z)_x^s$.
Thus, the momentum equation (\ref{EP-moment}) holds in the weak
sense.
\end{proof}

The proof of theorem \ref{EP1} covers of course the case of smooth solutions and in particular, the variational formula \eqref{WSEPsmu} describes  the globally-in-time smooth solutions in the sub-critical case \cite{EnLT01}. In this sense,  (\ref{WSEPsmu}) could be viewed  as an extension that covers both  sub-critical and
 super-critical initial configurations.
We close this section by reproducing
the proof of theorem \ref{IntroThm:1Ds} for the simpler case of smooth solutions.

\begin{lemma}\label{weakeqsm}
Consider (\ref{EP1}) with a smooth solution on $[0,T)$. Then $(\rho,\rho u)$ given in
(\ref{WSEPsmu}) is that smooth solution.
\end{lemma}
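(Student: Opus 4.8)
The plan is to show that, in the smooth regime, the variational minimizer $y(x,t)$ reduces to the foot of the unique characteristic through $(x,t)$, so that \eqref{WSEPsmu} reproduces term by term the explicit solution built by the method of characteristics in the proof of Lemma \ref{lemma:TCG}.

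First I would record what smoothness buys. Exactly as in the derivation preceding Lemma \ref{lemma:TCG}, a $C^1$ solution on $[0,T)$ forces the Jacobian $\Gamma(\alpha,t):=\partial_\alpha x(\alpha,t)=1+u_0'(\alpha)t+\tfrac12\kappa\rho_0(\alpha)t^2$ to remain strictly positive for all $\alpha\in\mathbb R$, $t\in[0,T)$; otherwise $u_x$ would blow up along that characteristic, cf. \eqref{clux}. Hence for each fixed $t<T$ the map $\alpha\mapsto x(\alpha,t)=\alpha+u_0(\alpha)t+\tfrac12\kappa E_0(\alpha)t^2$ is strictly increasing and $C^1$; let $\alpha=\alpha(x,t)$ denote its inverse (defined wherever some characteristic reaches $x$ --- off that range both the true solution and the formula \eqref{WSEPsmu} return vacuum, and the claim is immediate). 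In the notation of \eqref{EPysmu} this says exactly that $s\mapsto Q_{x,t}(s)=x(s,t)-x$ is strictly increasing, negative for $s<\alpha(x,t)$ and positive for $s>\alpha(x,t)$.

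Next I would identify $y(x,t)$ with $\alpha(x,t)$. Since $\rho_0\ge0$, the map $y\mapsto\int_0^y Q_{x,t}(s)\rho_0(s)\,ds$ has derivative $Q_{x,t}(y)\rho_0(y)$, which is $\le0$ on $(-\infty,\alpha(x,t)]$ and $\ge0$ on $[\alpha(x,t),\infty)$; it is therefore minimized on the set of points at which it is ``flat at the bottom'', a set that contains $\alpha(x,t)$. Its supremum $y(x,t)$ thus satisfies $y(x,t)\ge\alpha(x,t)$ with $Q_{x,t}\rho_0\equiv0$ a.e. on $\big(\alpha(x,t),y(x,t)\big)$, and since $Q_{x,t}>0$ there this forces $\rho_0\equiv0$ a.e. on that interval. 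Consequently the integrands in \eqref{WSEPsmu} vanish on $\big(\alpha(x,t),y(x,t)\big)$, so we may replace $y(x,t)$ by $\alpha(x,t)$ in both formulas --- which is also the content of remark \ref{ychar}, namely $Q_{x,t}(y(x,t))=0$.

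It then remains a one-line computation. Differentiating the identity $x(\alpha(x,t),t)=x$ in $x$ gives $\partial_x\alpha=1/\Gamma(\alpha,t)$; hence, using \eqref{solu-u} and \eqref{solu-rho} and writing $\alpha=\alpha(x,t)$, $\partial_x\mass(x,t)=\rho_0(\alpha)/\Gamma(\alpha,t)=\rho(x,t)$ and $\partial_x\moment(x,t)=\big(u_0(\alpha)+\kappa E_0(\alpha)t\big)\rho_0(\alpha)/\Gamma(\alpha,t)=u(x,t)\rho(x,t)$. At $t=0$ one has $\alpha(x,0)=x$, so the initial data matches, and $(\rho,\rho u)=(\partial_x\mass,\partial_x\moment)$ coincides pointwise with the solution generated by the characteristic ODEs \eqref{clx}--\eqref{clu}, which (being uniquely determined by those ODEs) is the given smooth solution. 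The only point that requires any care is the identification of $y(x,t)$ with $\alpha(x,t)$ where $\rho_0$ vanishes, so that the arginf set may be a nondegenerate interval; this is harmless precisely because $\mass$ and $\moment$ are insensitive to the choice of representative within that interval.
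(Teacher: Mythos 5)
Your proof is correct and arrives at the same conclusion, but it travels by a noticeably different route from the paper's, and in one respect it is more careful. The paper shows that the foot $\widetilde y(x,t)$ of the characteristic through $(x,t)$ is the (claimed unique) minimizer by a contradiction argument: any competing minimizer $w$ would force $Q_{x,t}$ to have the wrong sign somewhere between $w$ and $\widetilde y$, which would mean two characteristics cross before time $t$. It then verifies the density by establishing the integral identity \eqref{IM}, i.e.\ that the electric field $E$ is conserved along particle trajectories. You instead observe directly that $\Gamma>0$ on $[0,T)$ makes $s\mapsto Q_{x,t}(s)=x(s,t)-x$ strictly increasing with a single zero at $\alpha(x,t)$, so $y\mapsto\int_0^y Q_{x,t}\rho_0\,ds$ is monotone down, then up, and the minimizing set is pinned at $\alpha(x,t)$ up to $\rho_0$-null sets; you then differentiate $\mass$ and $\moment$ once and read off $\rho$ and $\rho u$ from \eqref{solu-rho} and \eqref{solu-u}. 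Your monotonicity argument buys something the paper's contradiction argument glosses over: when $\rho_0$ vanishes on an interval to the right of $\alpha(x,t)$, the arginf set is a nondegenerate interval and the minimizer is not literally unique; the paper's ``unique minimizer'' claim is, strictly speaking, too strong in that degenerate case, whereas you point out correctly that the ambiguity is harmless because $\mass$ and $\moment$ integrate against $\rho_0\,ds$ and are therefore insensitive to the choice of representative. The paper's route in turn makes the physical point (conservation of charge along trajectories) explicit, which the student's direct differentiation packages implicitly into \eqref{solu-rho}. Both are sound; yours is the more self-contained computation and closes a small gap.
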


\begin{proof} We denote the smooth solution by $(\widetilde{\rho},\widetilde{u})$, the solution given by
(\ref{WSEPsmu}) by $(\rho,u)$, and we show
$(\widetilde{\rho}(\cdot,t),\widetilde{u}(\cdot,t))=(\rho(\cdot,t),u(\cdot,t))$,
$t\in[0,T)$. We can solve the equation by the method of
characteristics, and we have shown the details in the proof of Lemma
\ref{lemma:TCG}. Since the solution is smooth, no characteristics
will intersect. For every given $x\in \mathbb{R}$, $t\in[0,T)$,
there is an unique $\widetilde{y}(x,t)$, such that the
characteristic which emanates from $\widetilde{y}(x,t)$ arrives $x$
at time $t$. We claim $\widetilde{y}(x,t)$ is the unique minimizer
of $\int_0^y Q_{x,t}(s)\rho_0(s)ds$, that is,

\[
\widetilde{y}(x,t)= \arginf_y \int_0^y Q_{x,t}(s)\rho_0(s)ds.
\]
Otherwise there exists $w\neq \widetilde{y}$ such that
\[ \displaystyle\int_0^{\widetilde{y}(x,t)} Q_{x,t}(s)\rho_0(s)ds\geq \int_0^w Q_{x,t}(s)\rho_0(s)ds, \]
i.e., \[
\displaystyle\int_{w}^{\widetilde{y}(x,t)}Q_{x,t}(s)\rho_0(s)ds\geq
0, \quad \textrm{if}\,\, w<\widetilde{y}(x,t),
\]
\[
\displaystyle\int_{\widetilde{y}(x,t)}^{w}Q_{x,t}(s)\rho_0(s)ds\leq
0, \quad \textrm{if}\,\, w>\widetilde{y}(x,t).
\]
Combining the above inequalities with $Q_{x,t}(\widetilde{y})=0$, we
obtain: if $w<\widetilde{y}$, then $\exists z\in (w,\widetilde{y})$,
such that $Q_{x,t}(z)\geq 0$, which implies the characteristics emanate form
$z$ and $\widetilde{y}$ must intersect no latter than $t$; otherwise 
$w>\widetilde{y}$, then $\exists z\in (\widetilde{y},w)$, such that
$Q_{x,t}(z)\leq 0$, which implies the characteristics emanate form
$z$ and $\widetilde{y}$ must intersect no later than $t$. This is a
contradiction to smooth solution exists up to $T>t$. Therefore,
\[
\mass(x,t):=\int_0^{\widetilde{y}(x,t)} \rho_0(s) ds, \qquad
 \moment(x,t):=\int_0^{\widetilde{y}(x,t)} \Big(u_0(s)+\kappa
E_0(s)t\Big)\rho_0(s) ds,
\]
and
\[
\rho(x,t)=\frac{\partial \widetilde{y}(x,t)}{\partial x}
\rho_0(\widetilde{y}), \qquad
 \rho(x,t)u(x,t)=\frac{\partial \widetilde{y}(x,t)}{\partial x}
\rho_0(\widetilde{y})\Big(u_0(\widetilde{y})+\kappa
E_0(\widetilde{y})t\Big).
\]
Thus
\[
u(x,t)=u_0(\widetilde{y})+\kappa
E_0(\widetilde{y})t=\widetilde{u}(x,t).
\]
To verify $\rho(x,t)=\widetilde{\rho}(x,t)$, it is enough show that
\begin{equation}\label{IM} \int_{-\infty}^{x} \widetilde{\rho}(s,t)
ds= \int_{-\infty}^{\widetilde{y}(x,t)} \rho_0(s)
ds\bigg(=\mass(x,t)=\int_{-\infty}^{x} \rho(s,t)ds\bigg), \quad
\forall x,\quad \forall t<T.
\end{equation}
We denote the inverse function of $\widetilde{y}$ by
$\widetilde{x}(y,t)$, which means the characteristic starts from $y$
arrives $\widetilde{x}$ at time $t$. Then (\ref{IM}) is equivalent
to
\begin{equation}\label{IM2}
\int_{-\infty}^{\widetilde{x}(y,t)} \widetilde{\rho}(s,t) ds=
\int_{-\infty}^{y} \rho_0(s) ds, \quad \forall y, \quad \forall t<T.
\end{equation}
Notice that $y=\widetilde{x}(y,0)$, so (\ref{IM2}) equivalents to:
for every fixed $y$, $E(y,t):=\int_{-\infty}^{\widetilde{x}(y,t)}
\widetilde{\rho}(s,t) ds$ is a constant. Physically, this is clear:
since no charge can across the particle path, by conservation of
charge, $E$ remains constant along $\widetilde{x}(y,t)$. To show $E$
is a constant along $\tilde{x}(y,t)$, we take the time derivative of
$E$, then
\[ \left. \begin{array}{lll}
\displaystyle\frac{\partial}{\partial t} E(y,t) \vspace{.1in} & = &
\displaystyle\frac{d
\widetilde{x}(y,t)}{dt}\widetilde{\rho}(\widetilde{x},t)+\int_{-\infty}^{\widetilde{x}(y,t)}\frac{\partial
\widetilde{\rho}(s,t)}{\partial
t}ds \vspace{.1in} \\
& = &
\widetilde{u}(\widetilde{x},t)\widetilde{\rho}(\widetilde{x},t)-
\displaystyle\int_{-\infty}^{\widetilde{x}(y,t)}
\bigg(\widetilde{\rho}(s,t)\widetilde{u}(\widetilde{x},t)\bigg)_x ds
=\widetilde{u}\widetilde{\rho}-\widetilde{\rho}\widetilde{u} =0. \\
\end{array} \right.
\]
\end{proof}

%%%%%%%%%%%%%%%%%%%%%%%%%%%%%%%%%%%%%%%%%%%%%%%%%%%%%%%%%%%%%%%%%%%%%%%%%%%%%%%%%%%%%%%%%
\section{Weak solutions of the repulsive pressureless Euler-Poisson system} \label{RPEP}
%%%%%%%%%%%%%%%%%%%%%%%%%%%%%%%%%%%%%%%%%%%%%%%%%%%%%%%%%%%%%%%%%%%%%%%%%%%%%%%%%%%%%%%%%

\setcounter{figure}{0} \setcounter{equation}{0}

For $\kappa>0$, (\ref{WSEPsmu}) still yields a weak solution of the
Euler-Poisson system, although it  may be different from the one corresponding
to the ``sticky particle model". The following
example demonstrates this point.

\begin{example} Let $\kappa=1$. We consider a system of two initial Dirac masses: both
have mass 1, their initial positions are 0 and 1, their initial
velocities are 2 and 0, respectively. Hence the initial density and
momentum are $ \rho_0(x)=\delta(x)+\delta(x-1)$, and
$\rho_0(x)u_0(x)=2\delta(x)$. The characteristics of them are
\[
y_1(t)=2t+\frac14 t^2, \qquad y_2(t)=1+\frac34 t^2.
\]
If we set $y_1(t)=y_2(t)$, then it yields two solutions:
$t_1=2-\sqrt{2}$ and $t_2=2+\sqrt{2}$. The dynamic of the weak
solution given by our formula is: the two particles collides at time
$t_1$, then they stick as one particle, when this new particle
arrives the location $y_1(t_2)=y_2(t_2)$ at time $t_2$, it splits
into two particles again.
\end{example}

Therefore, in the repulsive case $\kappa>0$, the weak solution given by
(\ref{WSEPsmu}) subject to smooth initial data --- even if
the data is super-critical,  will eventually become smooth again; consult remark \ref{rem:xyz} below.

To further clarify the different behavior of  solutions to the \emph{attractive} and \emph{repulsive}
pressureless Euler-Poisson, consider a two particles Euler-Poisson system with mass
$m_1,m_2$, initial velocity $u_1,u_2$, initial position $y_1<y_2$.
If $\kappa< 0$, then
\begin{equation}\label{y1y2e}
y_1(t)=y_1+u_1t+\frac12\kappa\frac{m_1}{2}t^2=y_2(t)=y_2+u_2t+\frac12\kappa(m_1+\frac{m_2}{2})t^2
\end{equation}
always yields one and only one positive solution $t_c$, such that
\begin{eqnarray*}
  y_1(t)<y_2(t), &  0\leq t<t_c &\\
 y_1(t)>y_2(t), & t>t_c.
\end{eqnarray*}
If $\kappa>0$, however, then (\ref{y1y2e}) can  either have no positive solution
or \emph{two} positive solutions, and for this is the reason, our representation
formula gives a ``non-sticky particle" solution for the repulsive model.

We now turn to the proof of theorem \ref{IntroThm:1Ds} in the repulsive case, $\kappa>0$.
\begin{proof} To verify the mass equation
(\ref{EP-mass}), we decompose $\mass_t$ and $\moment_x$ into two
parts: an absolutely continuous measure and a singular measure.
We distinguish between two cases.

(1). If $y(x-,t)=y(x,t)$, one can trace $x$ backward along the
characteristic at least for a short time, and then applies the argument
of Step \#1(a) in the proof of the attractive case, $\kappa\leq 0$ case.

(2) If $y(x-,t)<y(x,t)$, then there is a $\delta$-shock. The dynamics of the
$\delta$-shock for $\kappa>0$ is a little different from the one for
$\kappa\leq0$ in that  the shock may split
or disappear. But before it splits or disappears, the argument of
Step \#1(b) in the proof of $\kappa\leq0$ case is still valid. If the
$\delta$-shock splits, we apply the argument to each sub-shock, the
conclusion still holds. 

In a similar way, we can verify the moment
equation for $\kappa>0$.
\end{proof}

\begin{nota}
Using the same technique, we can easily extend Theorem
\ref{IntroThm:1Ds} to measure density initial data, that is, Theorem
\ref{IntroThm:1Dg}.
\end{nota}

\begin{nota}\label{rem:xyz}
There is a global smooth solution if and only if $x(\alpha,t)=\alpha+u_0(\alpha)t+\frac12 \kappa
E_0(\alpha)t^2$ remains a monotonically increasing function with
respect to $\alpha$, that
is, the solution remains smooth as long as
\begin{equation}\label{CTrepulsive}
\frac{\partial x(\alpha,t)}{\partial
\alpha}=1+u_0'(\alpha)t+\frac12\kappa \rho_0(\alpha)t^2>0, \quad
\forall \alpha\in \mathbb{R}, t>0.
\end{equation}
Solving (\ref{CTrepulsive}),
we obtain the following \emph{critical }condition: the repulsive
Euler-Poisson system admits a global smooth solution if and only if
$u_0'>-\sqrt{2\kappa\rho_0}$, \cite{EnLT01}. If the initial configuration fails to satisfy this  critical threshold condition, then there are two critical times
associated with the repulsive Euler-Poisson system. At the first
critical time
\[
t_{c_1}=\inf\{t| \exists \alpha\in \mathbb{R}, \,\,
\textrm{such that} \,\, 1+u_0'(\alpha)t+\frac12\kappa
\rho_0(\alpha)t^2=0\},
\]
the solution loses its $C^1$ smoothness; but
after the second critical time,
\[
t_{c_2}=\inf\{t|
t>t_{c_1},\,1+u_0'(\alpha)t+\frac12\kappa \rho_0(\alpha)t^2>0, \,
\forall \alpha\in \mathbb{R}\},
\]
the solution given by
(\ref{WSEPsmu}) becomes $C^1$  again.
\end{nota}

\begin{nota}
The $\delta$-shock splits for $\kappa>0$ because the particles have
``memory" back to time $t=0$. More precisely, for $\kappa>0$, we have the
following two constructions  which may yield different solutions at time $t$:

(1) Use (\ref{WSEPsmu}) with initial data $(\rho_0,u_0)$ to compute $\rho(\cdot,t)$ and $\rho
u(\cdot,t)$ directly.

(2) Fix $0<t_1<t$. First use (\ref{WSEPsmu}) with initial data $(\rho_0,u_0)$ to construct
$\rho(\cdot,t_1)$ and $\rho u(\cdot,t_1)$. Then, solve the Euler-Poisson system subject to initial data
$\rho(\cdot,t_1)$, $\rho u(\cdot,t_1)$ to obtain $\rho(\cdot,t)$ and $\rho u(\cdot,t)$.
That is, apply (\ref{WSEPsmu}) with initial data $(\rho,u)(\cdot,t_1)$
%\centerline{ \myr{*** something is wrong here: (1.6) twice***}}
to obtain
$\rho(\cdot,t)$ and $\rho u(\cdot,t)$. In this second construction, particles
are losing their ``history" before $t_1$.  That is, if particles collided before $t_1$, they will ``forget" they used
to be separate and therefore stick together forever.

To enforce the particles
to lose their ``history" at every moment (hence once collision happens, the particles will
forget they were separate, and stick as one particle thereafter),
one can impose the one sided Lipschiz
condition of lemma \ref{entropy}, as an entropy condition:
to this end, (\ref{EPysmu}) should be changed into
\begin{equation}\label{EPysmuentropy}
y(x,t) :=\sup\limits_{y}\bigg\{y \, \Big| \, y = \arginf_{y \in
\mathbb{R}\setminus D(t)} \int_0^y Q_{x,t}(s)\rho_0(s)ds \bigg \},
\end{equation}
where $D(t)=\{ z\,\big| \,\exists\, x\in\mathbb{R}, \exists\,
\widetilde{t}< t,\quad s.t.\quad
y(x-,\widetilde{t})<z<y(x,\widetilde{t})\}$.
\end{nota}

%%%%%%%%%%%%%%%%%%%%%%%%%%%%%%%%%%%%%%%%%%%%%%%%%%%%%%%%%%%%%%%%%%%%%%%%%%%%%%%%%
\section{Weak solution of multi-dimensional system with symmetry} \label{MS}
%%%%%%%%%%%%%%%%%%%%%%%%%%%%%%%%%%%%%%%%%%%%%%%%%%%%%%%%%%%%%%%%%%%%%%%%%%%%%%%%%

\setcounter{figure}{0} \setcounter{equation}{0}

In this section, we extend the result to the multi-dimensional
system with symmetry with explicit formulation of global weak
solutions for the weighted Euler-Poisson system outlined in theorem
\ref{IntroThm:MEP}.

\begin{proof} of theorem \ref{IntroThm:MEP}. Since the initial data
of (\ref{MultiDEP}) are spherically symmetric, the solution of
(\ref{MultiDEP}) will remain spherically symmetric,
$\rho(\mathbf{x},t)=\rho(r,t)$,
$\mathbf{u}(\mathbf{x},t)=w(r,t)\frac{\mathbf{x}}{r}$. Plugging
these into (\ref{MultiDrho}), we obtain
\[
\rho_t+\frac{\partial (\rho w)}{\partial r}
(\frac{\mathbf{x}}{r},\frac{\mathbf{x}}{r})+\rho
w(-\frac{1}{r^2}\frac{\mathbf{x}}{r},\mathbf{x})+\rho
w\frac{n}{r}=0,
\]
that is
\begin{equation}\label{rhosrt}
\rho_t+\frac{\partial (\rho w)}{\partial r} + \frac{n-1}{r}\rho w=0.
\end{equation}
Multiplying (\ref{rhosrt}) by $r^{n-1}$, we have
\begin{equation}
(r^{n-1}\rho)_t+\frac{\partial (r^{n-1}\rho w)}{\partial r}=0.
\end{equation}
Since $\rho$ is spherically symmetric, so does $V$, i.e.,
$V(\mathbf{x},t)=V(r,t)$. Therefore, $\nabla
V=\displaystyle\frac{\partial V}{\partial r}\frac{\mathbf{x}}{r}$,
and $\Delta V=V_{rr}+V_r\displaystyle\frac{n-1}{r}=\rho$.
Multiplying $V_{rr}+V_r\displaystyle\frac{n-1}{r}=\rho$ by
$r^{n-1}$, we obtain $(r^{n-1}V_r)_r=r^{n-1}\rho$. Plugging
$\rho(r,t)$, $\mathbf{u}(r,t)$ and $V(r,t)$ into (\ref{MultiDu}), we
have
\[ \Big(\rho w \frac{\mathbf{x}}{r}\Big)_t + \nabla\cdot\Big(\rho w^2
\frac{1}{r^2}\mathbf{x}\otimes\mathbf{x}\Big)=\kappa\rho
r^{n-1}\nabla V,
\]
which is
\[ \frac{\mathbf{x}}{r}(\rho w)_t+\frac{\rho w^2}{r^2}(n+1)\mathbf{x}+
\Big(\frac{\rho
w^2}{r^2}\Big)_r\frac{1}{r}(\mathbf{x},\mathbf{x})\mathbf{x}=\kappa\rho
r^{n-1} V_r\frac{\mathbf{x}}{r}.
\]
Further simplification yields
\[ (\rho w)_t+\frac{\rho w^2}{r}(n+1)+
\Big(\frac{\rho w^2}{r^2}\Big)_r r^2=(\rho w)_t+\frac{\rho
w^2}{r}(n-1)+ (\rho w^2)_r  =\kappa\rho r^{n-1} V_r. \] Multiplying
the above equation by $r^{n-1}$, we obtain
\begin{equation}\label{usrt}
(r^{n-1}\rho w)_t+ (r^{n-1}\rho w^2)_r  =\kappa r^{n-1}\rho r^{n-1}
V_r. \end{equation} Therefore, let
$\varsigma(|\mathbf{x}|,t)=|\mathbf{x}|^{n-1}\rho(|\mathbf{x}|,t)$,
then (\ref{rhosrt}) and (\ref{usrt}) can be rewritten as
\begin{equation}\label{MultiD1Dsym}
\begin{aligned}
\varsigma_t+(\varsigma w)_r & =0, \\
(\varsigma w)_t+(\varsigma w^2)_r & =\kappa \varsigma r^{n-1}
V_r=:\kappa \varsigma E, \quad \frac{\partial E}{\partial
r}=\frac{\partial (r^{n-1}V_{r})}{\partial r}=\varsigma.
\end{aligned}
\end{equation}
Consider (\ref{MultiD1Dsym}) with symmetric initial density $\varsigma(-r,0)=\varsigma(r,0)=r^{n-1}\rho(r,0)$ and anti-symmetric initial velocity $w(-r,0)=-w(r,0)$. Let
\begin{equation}\label{EDcons}
E(r,t)=\displaystyle\frac12\Big(\int_0^{r-}\varsigma(s,t)ds+\int_0^{r+}\varsigma(s,t)ds\Big).
\end{equation} 
The difference between the electric field (\ref{EDcons}) and the previous one (which is defined by $E(s,t)=\frac12(\int_0^{r-}\varsigma(s,t)ds+\int_0^{r+}\varsigma(s,t)ds)$) is a constant $\int_{-\infty}^{0}\varsigma(s,0)ds$. 
Physically, it corresponds to a Galilean transformation. For symmetric initial data, choosing $E$ as (\ref{EDcons}) is natural and 
convenience, since in such setting the particle located at the origin will not move and $(\varsigma,w)$ will stay symmetric.  
Applying theorem \ref{IntroThm:1Dg} with the electric field (\ref{EDcons}), we obtain a weak solution for
$(\varsigma,w)$. Then we recover $\Big(\rho(\mathbf{x},t),
(\mathbf{u}(\mathbf{x},t)\Big) $ from $\Big(\varsigma(r,t),w(r,t)\Big)$, $r\geq 0$.
\end{proof}

%%%%%%%%%%%%%%%%%%%%%%%%%%%%%%%%%%%%%%%%%%%%%%%%%%%%%%%%%%%%%
\section{Concluding remarks} \label{Conclu}
%%%%%%%%%%%%%%%%%%%%%%%%%%%%%%%%%%%%%%%%%%%%%%%%%%%%%%%%%%%%%

\setcounter{figure}{0} \setcounter{equation}{0}

We have constructed a global weak solution for the 1D pressureless Euler-Poisson
system.
For the weighted multi-dimensional pressureless Euler-Poisson system
with symmetry, which is essentially a 1D system, we have constructed
a global weak solution in the same manner. The open question is: is
it possible to at least extend the method to the real
multi-dimensional Euler system and obtain a weak solution from the
physical laws directly?

%%%%%%%%%%%%%%%%%%%%%%%%%%%%%%%%%%%%%%%%%%%%%%%%%%%%%%%%%%%%%%%%%%

\end{document}